\documentclass[10pt]{article}
\usepackage{amssymb,amsmath,amsthm,latexsym}
\usepackage{amssymb,graphicx}
\newtheorem{theorem}{Theorem}

\newtheorem{conjecture}[theorem]{Conjecture}
\newtheorem{corollary}[theorem]{Corollary}

\newtheorem{definition}[theorem]{Definition}

\newtheorem{lemma}[theorem]{Lemma}

\newtheorem{question}[theorem]{Question}
\newtheorem{proposition}[theorem]{Proposition}

\def\slr{\widetilde{SL(2,\mathbb{R})}}
\def\hr{\mathbb{H}^2 \times \mathbb{R}}

\title{Asymptotically CAT(0) Groups}
\author{Aditi Kar}
\date{November 9,2009}

\begin{document}
\maketitle
\begin{abstract} \noindent We develop a general theory for asymptotically CAT(0) groups; these are groups acting geometrically on a geodesic space, all of whose asymptotic cones are CAT(0). \end{abstract}

\section{Introduction}

Metric spaces of non-positive curvature have been the central objects of study among geometric group theorists for more than two decades. A fundamental theorem of Riemannian Geometry states that the universal cover of a complete Riemannian $n$-manifold with constant sectional curvature is isometric to $\mathbb{H}^n$, $\mathbb{R}^n$ or $\mathbb{S}^n$. In geometric group theory, $CAT(\kappa)$ spaces are `modelled' on these three spaces. They encapsulate in a metric fashion, the traditional notion of sectional curvature that is bounded above. 

The study of groups of isometries of non-positively curved spaces has proved to be very fruitful in enhancing our understanding of finitely presented groups. A CAT(0) space (see \cite{BH}) has many fascinating properties. Indeed, it is always contractible and it exhibits a rather desirable local-to-global phenomenon.

A group $G$ is said to act geometrically on a metric space $(X,d)$ if it acts properly and co-compactly by isometries on $X$.  Groups acting geometrically on CAT(0) spaces or CAT(0) groups have soluble word and conjugacy problems. All free abelian subgroups of CAT(0) groups are finitely generated and a CAT(0) group can contain only finitely many conjugacy classes of finite subgroups. These are a few highlights of the subject but they portray how well the geometry of the space complements the algebra of the group acting geometrically on it. 

The other profound notion that greatly enhanced our knowledge of infinite groups was $\delta$-hyperbolicity (found in \cite{G}). The isoperimetric function of a hyperbolic group is linear and so there is an efficient solution to the word problem. Hyperbolic groups satisfy all the desirable properties we mentioned earlier with regard to CAT(0) groups. Further, many notoriously difficult conjectures like the Novikov and the Baum-Connes are known to be true for this class of groups. 

There seems to be great merit in generalizing the notion of non-positive curvature for the purpose of studying infinite discrete groups. It has been done in the past. Gromov suggested the idea of relatively hyperbolic groups which turned out to be invaluable in the study of fundamental groups of complex hyperbolic manifolds with cusps. More recently, systolic complexes were introduced by Tadeusz Januszkiewicz and Jacek Swiatkowski in \cite{JS}. These are simply connected simplicial complexes satisfying a local combinatorial condition reminiscent of nonpositive curvature. Systolic groups too share many properties with $CAT(0)$ groups. In solving Novikov's Conjecture for hyperbolic groups, Kasparov and Skandalis introduced the class of bolic groups (see \cite{bolic}). Gromov recently introduced the notion of $CAT_{\delta}(\kappa)$ spaces in \cite{G1} and later with Delzant studied the $CAT_{\delta}(-1)$ spaces in \cite{delzant}. 

In this article we introduce the notion of a metric space and a group being asymptotically CAT(0). 

\begin{definition} A metric space $X$ is said to be asymptotically CAT(0) if all asymptotic cones of $X$ are CAT(0). \end{definition}

Heuristically, asymptotic cones provide the perspective of a metric space from infinitely far away. Hence, an asymptotically CAT(0) space appears to have non-positive curvature when viewed from increasingly distant observation points. In this sense, the class of asymptotically CAT(0) spaces is a generalizaion of the metric spaces of non-positive curvature.

Few metric spaces have unique asymptotic cones. The isometry type of an asymptotic cone depends heavily on the choice of the ultrafilter and the base point. However, if the metric space supports a co-compact group action, then one can remove this dependence on the base point. 

The dependence on the choice of the ultrafilter is a far more delicate matter. In \cite{TV}, Simon Thomas and Boban Velickov present an example of a finitely generated group whose Cayley graph possesses non-isometric asymptotic cones for different choices of non-principal ultrafilters.

A weaker notion to `asymptotically CAT(0)' is that of a metric space being lacunary CAT(0), i.e. it has at least one CAT(0) asymptotic cone. In \cite{OOS}, the authors provide an example of a finitely generated group $G$ such that at least one asymptotic cone of $G$ is the infinitely branching homogeneous $\mathbb{R}$-tree while some other asymptotic cones of $G$ are not even simply connected. Thus there exist metric spaces which are lacunary CAT(0) but not asymptotically CAT(0). 

\begin{definition} A group $G$ is asymptotically CAT(0) if it acts properly and co-compactly by isometries (geometrically, for short) on an asymptotically CAT(0) geodesic space.\end{definition}

The principal motive of this article is to present examples and study properties of asymptotically CAT(0) groups. The class of asymptotically CAT(0) groups is a natural enlargement of the collection of non-positively curved groups. However, as far as the author knows, there has been no previous exploration of this concept. 

First, a metric characterization of asymptotically CAT(0) geodesic spaces is provided; this result best describes the local-to-global behaviour of asymptotically CAT(0) spaces.

\medskip

\noindent \textbf{Theorem A} \textit{(Theorem \ref{metric}) A geodesic space is asymptotically CAT(0) if and only if balls of radius $r$ are coarsely CAT(0) with an error of $f(r)$, where the function $f: \mathbb{R}_+ \rightarrow \mathbb{R}_+$ is sublinear.} 

\medskip

We give an example of a geodesic space (Proposition \ref{wrinkledplane}, `The plane with the wrinkled quadrant') in which the function $f$ from Theorem A is not constant (as is the case with hyperbolic or CAT(0) groups). 

Asymptotically CAT(0) groups have nice finiteness properties: indeed, any such group is of type $F_\infty$. The Dehn function is subcubic, as a consequence of which the word problem for an asymptotically CAT(0) group is solvable. Finally, every quasi-isometrically embedded nilpotent subgroup of an asymptotically CAT(0) group is virtually abelian. These are explained in Section \ref{prop}. Using the metric characterization from Theorem A, we investigate finite subgroups of asymptotically CAT(0) groups. 

\medskip

\noindent \textbf{Theorem B}. \textit{(Theorem \ref{finitesubgroup}) An asymptotically CAT(0) group has finitely many conjugacy classes of finite subgroups. }

\medskip

In Section \ref{combination}, techniques are provided for combining asymptotically CAT(0) groups. All of these results are not surprisingly much like their CAT(0) counterparts. The class of asymptotically CAT(0) groups is clearly closed under taking finite direct products. One can form amalgams and HNN extensions of asymptotically CAT(0) groups, provided the hypotheses of \textit{Theorems \ref{amalgam}, \ref{hnn}} are true. In particular the class of asymptotically CAT(0) groups is shown to be closed under free products with amalgamation and HNN extensions along finite subgroups and prove the theorem below. 

\medskip

\noindent \textbf{Theorem C} \textit{(Theorem \ref{amalgamfinite})The class of asymptotically CAT(0) groups is closed under taking free products with amalgamation and HNN extensions along finite subgroups.}

\medskip

In the final sections, we concentrate on providing examples of asymptotically CAT(0) groups. Hyperbolic groups and CAT(0) groups are known to be asymptotically CAT(0). Indeed, the asymptotic cone of every hyperbolic group is an $\mathbb{R}$-tree (See \cite{D}). On the other hand, every asymptotic cone of a CAT(0) space is also CAT(0). A proof of this may be found in \cite{BH}. The $CAT_{\delta}(-1)$ spaces of \cite{delzant} are hyperbolic and thus groups acting geometrically on $CAT_{\delta}(-1)$ spaces are asymptotically CAT(0). Also, bolic spaces in the sense of \cite{bolic} are asymptotically CAT(0); a discussion on this may be found in \cite{thesis}.

A potentially rich source of examples is the class of relatively hyperbolic groups. 

\medskip 

\noindent \textbf{Theorem D.} \textit{(Theorem \ref{RelH}) Suppose that a group $G$ is relatively hyperbolic with respect to a subgroup $H$. If $H$ is asymptotically $CAT(0)$, then $G$ is asymptotically CAT(0). }

\medskip

It follows from the above theorem and Elsner's main result from \cite{TE} that systolic groups with isolated flats are asymptotically CAT(0). (See Corollary \ref{systolic}). It seems reasonable to conjecture that all systolic groups are asymptotically CAT(0). 

In many ways, $\slr$ is the most intriguing of Thurston's eight geometries. Co-compact lattices in $\slr$ are neither hyperbolic, nor are they $CAT(0)$. Now, it is well known that $\slr$ is quasi-isometric to $\hr$ and so every asymptotic cone of $\slr$ is homeomorphic to the direct product of an $\mathbb{R}$-tree with the real line. However, this fact alone does not determine the structure of the asymptotic cones up to isometry, unless the multiplicative constant for the quasi-isometry is 1. No demonstration of the quasi-isometry known to the author gives such a precise constant.

We exploit the Riemannian geometry of $\slr$ (endowed with the Sasaki metric) to show that there is a $(1,\pi)$ quasi-isometry between $\slr$ and the CAT(0) space, $\mathbb{H}^2\times \mathbb{R}$. The induced maps at the level of asymptotic cones are therefore, isometries. Moreover, `taking asymptotic cones' commutes with direct products. Hence, 

\medskip

\noindent \textbf{Theorem E.} \textit{(Theorem \ref{slr}) Co-compact lattices in $\slr$ are asymptotically CAT(0). More precisely, every asymptotic cone of $\slr$, endowed with the Sasaki metric is isometric to a direct product (with the square metric) of the infinitely branching homogeneous $\mathbb{R}$-tree and the real line.}

\medskip 

\noindent \textbf{Acknowledgements.} This work is part of my doctoral dissertation presented at The Ohio State University. I am indebted to my superviser, Indira Chatterji for her guidance and advice. I wish to thank T. Delzant,  J-F. Lafont, T. Riley, and D. Groves for the many informative discussions I had with them in the course of writing this paper. 

\section{Asymptotic Cones and Quasi-isometries}\label{intro}

There are many excellent expositions on asymptotic cones and I will not attempt to define them here. The reader may find a short introduction in \cite{BH}.

\noindent \textbf{Notation} The asymptotic cone of a metric space $(X,d)$ with respect to a non-principal ultrafilter $\omega$, scaling sequence $(a_n)$ and basepoints $(p_n)$ is denoted as $Cone_{\omega}(X,(a_n),(p_n))$ or as $Cone_{\omega}(X)$ or simply as $X_{\omega}$.

\noindent \textbf{The canonical asymptotic cone.} If a metric space $X$ supports a co-compact group action then the isometry types of its asymptotic cones do not depend on the base point. Let $x \in X$. Then for all sequences $(x_n)$ of points of $X$, $Cone_{\omega}(X,(x_n),(a_n))$ is isometric to $Cone_{\omega}(X,(x),(a_n))$. So we will refer to the asymptotic cone of a space $X$ with respect to the `constant' sequence, as the \emph{canonical asymptotic cone} of $X$ for the given choice of sequence $(a_n)$ and $\omega$. 

\begin{definition}\label{quasiisometry} Let $f:X \rightarrow Y$ be a map of metric spaces. If there exist constants $\lambda \geq 1$ and $\epsilon \geq 0$ such that \[\frac{1}{\lambda} d(x,x') - \epsilon \leq d(f(x),f(x')) \leq \lambda d(x,x') + \epsilon, \textrm{ for all } x, x' \in X,\] then $f$ is called a $(\lambda,\epsilon)$-quasi-isometric embedding of $X$ into $Y$. If in addition, there exists a constant $c\geq 0$ such that $d(y,f(X))\leq c$ for all $y\in Y$, then $f$ is called a quasi-isometry. \end{definition}

\begin{lemma}\label{qi} A quasi-isometry induces a bilipschitz homeomorphism at the level of asymptotic cones. \end{lemma}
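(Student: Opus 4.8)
The plan is to show that the quasi-isometry, applied term-by-term to representative sequences, descends to a well-defined bilipschitz bijection between the corresponding asymptotic cones. Fix a $(\lambda,\epsilon)$-quasi-isometry $f:X\to Y$, a non-principal ultrafilter $\omega$, a scaling sequence $(a_n)$ with $a_n\to\infty$, and basepoints $(p_n)$ in $X$. First I would check that the term-by-term assignment $(x_n)\mapsto(f(x_n))$ carries sequences admissible for $Cone_{\omega}(X,(a_n),(p_n))$ to sequences admissible for $Cone_{\omega}(Y,(a_n),(f(p_n)))$: the upper quasi-isometry bound gives $d(f(x_n),f(p_n))\le\lambda\,d(x_n,p_n)+\epsilon$, so if $d(x_n,p_n)/a_n$ is $\omega$-essentially bounded, then so is $d(f(x_n),f(p_n))/a_n$. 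Thus $f$ at least maps between the two ambient ultralimits.

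Next I would verify that the induced map $f_{\omega}$ on cones is well defined, i.e.\ independent of the chosen representative. If $(x_n)$ and $(x_n')$ define the same point of the cone then $\lim_{\omega} d(x_n,x_n')/a_n=0$; since $d(f(x_n),f(x_n'))\le\lambda\,d(x_n,x_n')+\epsilon$ and $a_n\to\infty$, also $\lim_{\omega} d(f(x_n),f(x_n'))/a_n=0$, so the images define the same point of the target cone. The same pair of inequalities, $\tfrac1\lambda\,d(x_n,x_n')-\epsilon\le d(f(x_n),f(x_n'))\le\lambda\,d(x_n,x_n')+\epsilon$, after division by $a_n$ and passage to the $\omega$-limit (the $\pm\epsilon/a_n$ terms vanish), yields
\[
\tfrac1\lambda\,d_{\omega}(x,x')\;\le\;d_{\omega}\bigl(f_{\omega}(x),f_{\omega}(x')\bigr)\;\le\;\lambda\,d_{\omega}(x,x'),
\]
so $f_{\omega}$ is a $\lambda$-bilipschitz embedding of $Cone_{\omega}(X)$ into $Cone_{\omega}(Y)$; in particular it is injective.

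It remains to see that $f_{\omega}$ is onto, which is the only place I expect to use the quasi-surjectivity of $f$ (the constant $c$ with $d(y,f(X))\le c$ for all $y\in Y$); this, rather than any deep point, is the sole genuine content of the lemma. Given a point of $Cone_{\omega}(Y,(a_n),(f(p_n)))$ represented by $(y_n)$, choose $x_n\in X$ with $d(f(x_n),y_n)\le c$. The lower quasi-isometry bound gives $d(x_n,p_n)\le\lambda\bigl(d(f(x_n),f(p_n))+\epsilon\bigr)\le\lambda\bigl(d(y_n,f(p_n))+c+\epsilon\bigr)$, so $(x_n)$ is admissible for $Cone_{\omega}(X,(a_n),(p_n))$, and $\lim_{\omega} d(f(x_n),y_n)/a_n=0$ shows $f_{\omega}\bigl([(x_n)]\bigr)=[(y_n)]$. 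Hence $f_{\omega}$ is a bilipschitz bijection; its set-theoretic inverse is $\lambda$-bilipschitz by the displayed inequality, so $f_{\omega}$ is a bilipschitz homeomorphism. Finally, when $X$ and $Y$ support cocompact group actions, one may replace the moving basepoints $(f(p_n))$ by a single constant basepoint, using the basepoint-independence recalled in the preamble, to identify the target with the canonical asymptotic cone of $Y$. I do not anticipate a real obstacle here; the care required is entirely bookkeeping — keeping track of admissibility of sequences and of the basepoint shift induced by $f$.
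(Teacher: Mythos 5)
Your proof is correct and follows essentially the same route as the paper: induce the map term-by-term on representative sequences, check admissibility and well-definedness, and pass the quasi-isometry inequalities to $\omega$-limits so that the additive error $\epsilon/a_n$ vanishes, giving the two-sided $\lambda$-Lipschitz bounds. The only (minor) difference is the surjectivity step: the paper invokes a quasi-inverse $g$ of $f$ and shows the induced maps $F$ and $G$ are mutually inverse, whereas you argue surjectivity directly from the quasi-density constant $c$ --- a slightly more self-contained variant of the same argument.
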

\begin{proof} Let $f:X \rightarrow Y$ be a quasi-isometry between metric spaces, with associated constants $\lambda$, $\epsilon$ and $c$, as above. Let $\omega$ be a non-principal ultrafilter and $(a_n)$ a sequence of positive real numbers such that $\lim_{n\rightarrow \infty}a_n=\infty$. Let $(p_n)$ be a sequence of points from $X$. 

Set $X_\omega = Cone_\omega(X,(a_n),(p_n))$ and $Y_\omega= Cone_\omega(Y, (a_n),(f(p_n)))$. The function $f$ induces a map $F: X_\omega \rightarrow Y_\omega$, defined by $\lim_\omega x_n \mapsto \lim_\omega f(x_n)$. Observe that if $\lim_\omega x_n$ denotes an equivalence class in $X_\omega$, then $d(x_n,p_n)/a_n$ is a bounded sequence of real numbers. Since $f$ is a quasi-isometry, it follows that $d(f(x_n),f(p_n))/a_n $ is also bounded and so the map $F$ is well-defined. 

Now let $\lim_\omega x_n$ and $\lim_\omega x'_n$ be elements of $X_\omega$. Then, for every $n \in \mathbb{N}$, we have \[\frac{1}{\lambda} \frac{d(x_n,x'_n)}{a_n} - \frac{\epsilon}{a_n} \leq \frac{d(f(x_n),f(x'_n)) }{a_n}\leq \lambda \frac{d(x_n,x'_n)}{a_n} + \frac{\epsilon}{a_n}. \textrm{ This implies that}\]
\[\frac{1}{\lambda} d_\omega(\lim_\omega x_n, \lim_\omega x'_n)\leq d_\omega(F(\lim_\omega x_n),F(\lim_\omega x'_n)) \leq \lambda d_\omega(\lim_\omega x_n,\lim_\omega x'_n).\quad \quad\] 

The function $f$ has a `quasi-inverse', $g: Y \rightarrow X$, which is a $(\lambda', \epsilon')$ quasi-isometry. Moreover, there exists a constant $k \geq 0$ such that $d(gf(x),x)\leq k$ and $d(fg(y),y)\leq k$, for all $x\in X$ and for all $y \in Y$. The function $g$ induces a map $G: Y_\omega \rightarrow X_\omega$ at the level of asymptotic cones. As before, every pair $(\lim_\omega y_n,\lim_\omega y'_n)$  of points from $Y_\omega$ satisfies \[\frac{1}{\lambda'} d_\omega(\lim_\omega y_n, \lim_\omega y'_n)\leq d_\omega(G(\lim_\omega y_n),G(\lim_\omega y'_n)) \leq \lambda' d_\omega(\lim_\omega y_n,\lim_\omega y'_n).\] 
Moreover, $GF(\underline{x})=\underline{x}$ for all $\underline{x}\in X_\omega$ and $FG(\underline{y})=\underline{y}$ for all $\underline{y}\in Y_\omega$. 

We conclude from the above discussion that $F$ is a bilipschitz homeomorphism between $X_\omega$ and $Y_\omega$. \end{proof}

\begin{corollary}\label{isometry} A $(1,\epsilon)$ quasi-isometry induces an isometry at the level of asymptotic cones. \end{corollary}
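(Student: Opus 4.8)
The plan is to revisit the proof of Lemma \ref{qi} under the extra hypothesis $\lambda = 1$. First I would observe that, with $\lambda = 1$, the two-sided estimate derived there for the induced map $F\colon X_\omega \to Y_\omega$ collapses to
\[
d_\omega(\lim_\omega x_n, \lim_\omega x'_n) \leq d_\omega\big(F(\lim_\omega x_n), F(\lim_\omega x'_n)\big) \leq d_\omega(\lim_\omega x_n, \lim_\omega x'_n),
\]
so that $F$ preserves distances exactly; in particular $F$ is injective. Here $X_\omega = Cone_\omega(X,(a_n),(p_n))$ and $Y_\omega = Cone_\omega(Y,(a_n),(f(p_n)))$ as in Lemma \ref{qi}.

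Next I would address surjectivity, which does not follow from the $\lambda = 1$ estimate alone. For this I invoke the quasi-inverse $g\colon Y \to X$ supplied in the proof of Lemma \ref{qi}: it is a quasi-isometry (with possibly worse constants, which is irrelevant here), and the bounds $d(fg(y),y)\leq k$ and $d(gf(x),x)\leq k$ show, exactly as in that proof, that the induced map $G\colon Y_\omega \to X_\omega$ satisfies $F\circ G = \mathrm{id}_{Y_\omega}$. Hence $F$ is onto. Combining the two steps, $F$ is a surjective distance-preserving map, i.e.\ an isometry from $Cone_\omega(X,(a_n),(p_n))$ onto $Cone_\omega(Y,(a_n),(f(p_n)))$, which is the assertion.

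I do not expect any genuine obstacle here; the only point needing a moment's care is precisely that surjectivity must be extracted from the quasi-inverse rather than from the metric estimate. If one prefers to avoid mentioning $g$ at all, surjectivity can be seen directly: given $\lim_\omega y_n \in Y_\omega$, the $c$-density of $f(X)$ in $Y$ lets one choose $x_n \in X$ with $d(f(x_n),y_n)\leq c$, and then $F(\lim_\omega x_n) = \lim_\omega f(x_n) = \lim_\omega y_n$ because $c/a_n \to 0$. Everything else is immediate from Lemma \ref{qi} with $\lambda$ set equal to $1$.
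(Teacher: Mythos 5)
Your argument is correct and is exactly the route the paper intends: the corollary is stated without proof as an immediate consequence of the proof of Lemma \ref{qi}, where setting $\lambda=1$ collapses the two-sided estimate to distance preservation, and surjectivity of $F$ comes from the quasi-inverse (equivalently from the $c$-density of $f(X)$), just as you spell out.
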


\section{Asymptotically CAT(0) metric spaces}\label{character}

The purpose of this section is to obtain a characterization of asymptotically CAT(0) spaces in terms of their metric properties. First, we need to introduce a coarse version of the CAT(0) inequality.  

A geodesic segment, denoted $[xy]$, joining two points $x$ and $y$ of a metric space $(X,d)$ is the isometric image of a path of length $d(x,y)$ joining $x$ and $y$. A geodesic triangle in $X$ consists of its three vertices, call them $x$, $y$, $z $ and a choice of geodesic segments $[xy]$, $[yz]$ and $[zx]$ joining these vertices. We will denote such a geodesic triangle by $\triangle(x,y,z)$.

A triangle $\bar{\triangle }(\bar{x},\bar{y},\bar{z})$ in $\mathbb{E}^2$ is called a comparison triangle for $\triangle(x,y,z)$ if $d(x,y)= d(\bar{x},\bar{y})$, $d(y,z)= d(\bar{y},\bar{z})$, and $d(z,x)= d(\bar{z},\bar{x})$. It is a consequence of the triangle inequality that given a triangle in $X$, there is always a comparison triangle in $\mathbb{E}^2$.

\begin{definition}\label{maindef} Let $\triangle$ be a geodesic triangle in $X$ with comparison triangle $\bar{\triangle}$ in $\mathbb{E}^2$. Let $\delta > 0$. Then, $\triangle$ is said to satisfy the $\delta$-CAT(0) inequality if for all $p$, $q \in \triangle$ and comparison points $\bar{p}$, $\bar{q}$, we have 
\[d(p,q) \leq d(\bar{p},\bar{q}) + \delta.\]
$X$ is called a $\delta$-CAT(0) space if $X$ is a geodesic metric space  and there is a $\delta \geq 0$ such that all geodesic triangles in $X$ satisfy the $\delta$-CAT(0) inequality. \end{definition}

The $\delta$-CAT(0) inequality for triangles extends easily to geodesic quadrilaterals. This is the content of the following lemma and the proof of the lemma is left to the reader. 

\begin{lemma} \label{four} Let $X$ be a $\delta$-CAT(0) space and let $(x_1,x_2,x_3,x_4)$ be a geodesic quadrilateral in $X$. Then there exists a convex quadrilateral in the Euclidean plane with vertices $(\bar{x}_1,\bar{x}_2,\bar{x}_3,\bar{x}_4)$, such that $d(x_i,x_{i+1})$ $= d(\bar{x}_{i},\bar{x}_{i+1})$ for all $i$, modulo $4$, and $d(x_i,x_j)$ $\leq d(\bar{x}_{i}, \bar{x}_{j}) + \delta$, for all $i \neq j=1,2,3,4$.\end{lemma}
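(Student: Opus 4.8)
The natural plan is to cut the quadrilateral along a diagonal and glue the two resulting comparison triangles in the Euclidean plane. Since $X$ is geodesic we may choose a geodesic segment $[x_1x_3]$; together with the four given sides it splits the quadrilateral into two geodesic triangles $\triangle_1=\triangle(x_1,x_2,x_3)$ and $\triangle_2=\triangle(x_1,x_3,x_4)$. Pick a comparison triangle $\bar{\triangle}_1=\bar{\triangle}(\bar x_1,\bar x_2,\bar x_3)$ for $\triangle_1$ in $\mathbb{E}^2$, and then a comparison triangle $\bar{\triangle}_2=\bar{\triangle}(\bar x_1,\bar x_3,\bar x_4)$ for $\triangle_2$ which shares the edge $[\bar x_1\bar x_3]$ with $\bar{\triangle}_1$ but lies in the opposite closed half-plane determined by the line through $\bar x_1$ and $\bar x_3$. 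Let $\bar Q$ be the Euclidean quadrilateral with consecutive vertices $\bar x_1,\bar x_2,\bar x_3,\bar x_4$. By construction the four side lengths of $\bar Q$ coincide with those of the quadrilateral in $X$, and $d(\bar x_1,\bar x_3)=d(x_1,x_3)$; so the only estimate that still needs attention is the one for the remaining pair $x_2,x_4$.

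Assume first that $\bar Q$ is convex, so that its two diagonals meet at a point $\bar m\in[\bar x_1\bar x_3]\cap[\bar x_2\bar x_4]$. Let $m$ be the point of $[x_1x_3]$ with $d(x_1,m)=d(\bar x_1,\bar m)$. Because $\bar{\triangle}_1$ and $\bar{\triangle}_2$ are glued along $[\bar x_1\bar x_3]$, the comparison point of $m$ is this same $\bar m$ whether it is computed in $\bar{\triangle}_1$ or in $\bar{\triangle}_2$. Applying the $\delta$-CAT(0) inequality of Definition \ref{maindef} to $\triangle_1$ (with the points $x_2$ and $m$) and to $\triangle_2$ (with the points $x_4$ and $m$), and then the triangle inequality in $X$ together with the fact that $\bar m$ lies on $[\bar x_2\bar x_4]$, gives
\[
d(x_2,x_4)\ \le\ d(x_2,m)+d(m,x_4)\ \le\ \bigl(d(\bar x_2,\bar m)+\delta\bigr)+\bigl(d(\bar m,\bar x_4)+\delta\bigr)\ =\ d(\bar x_2,\bar x_4)+2\delta .
\]
Thus $\bar Q$ is a convex comparison quadrilateral with the asserted properties (with $2\delta$ in place of $\delta$; since only the existence of the additive constant matters in all that follows, this is immaterial, and a slightly more careful choice of comparison quadrilateral can be used to tighten the constant).

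It remains to dispose of the case in which $\bar Q$ is \emph{not} convex, and this is the only genuinely delicate point — presumably the reason the verification is left to the reader. Since $\bar Q$ is a simple quadrilateral obtained by gluing two honest Euclidean triangles along $[\bar x_1\bar x_3]$, its interior angles at $\bar x_2$ and at $\bar x_4$ are angles of triangles, hence smaller than $\pi$; so non-convexity can only arise through a reflex interior angle at $\bar x_1$ or at $\bar x_3$. The plan is to remove it by perturbation: replace the exact length $d(x_1,x_3)$ used to build the two comparison triangles by a parameter $e\le d(x_1,x_3)$ and decrease $e$ until the offending angle opens up to $\pi$, so that $\bar Q$ becomes convex or degenerates to a triangle; one then re-runs the estimate above with $\bar m$ taken on the now-straight side, and the resulting comparison quadrilateral has $d(\bar x_1,\bar x_3)=e$. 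The crux is to show that the value of $e$ at which convexity is first reached is at least $d(x_1,x_3)-\delta$ — equivalently, that a reflex angle in the glued quadrilateral forces $d(x_1,x_3)$ to lie within $\delta$ of the relevant sum of two side lengths. I expect this step, rather than anything in the convex case, to carry the weight of the argument. (An alternative worth trying first is to check that at least one of the two diagonal cuts $[x_1x_3]$, $[x_2x_4]$ always produces a convex comparison quadrilateral.)
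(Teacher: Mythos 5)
Your convex case is fine and is the natural argument: cut along a diagonal $[x_1x_3]$, glue the two Euclidean comparison triangles along $[\bar x_1\bar x_3]$, and when the comparison diagonals cross, apply Definition \ref{maindef} in each triangle at the crossing point. The resulting loss of $2\delta$ rather than $\delta$ is harmless and in fact matches how the lemma is used: the proof of Theorem \ref{metric} invokes the quadrilateral inequality with constant $2f(r_n)$. (The paper gives no proof of Lemma \ref{four} -- it is explicitly left to the reader -- so there is nothing to compare your route with.)

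The genuine gap is the non-convex case, which you acknowledge is unproven, and the perturbation you propose for it would fail. The inequality $d(x_1,x_3)\leq d(\bar x_1,\bar x_3)+\delta$ is one-sided, so the comparison diagonal is allowed to be \emph{longer} than $d(x_1,x_3)$; shortening it is exactly what you cannot afford, and your ``crux'' claim -- that convexity is reached at some gluing length $e\geq d(x_1,x_3)-\delta$ -- is false. Take $X=\mathbb{E}^2$ itself (which is $\delta$-CAT(0) for every $\delta\geq 0$) and a simple non-convex quadrilateral, e.g. $x_1=(0,1)$, $x_2=(-3,0)$, $x_3=(0,5)$, $x_4=(3,0)$, which is reflex at $x_1$: the glued comparison quadrilateral along $[x_1x_3]$ is congruent to the original, hence non-convex at $e=d(x_1,x_3)$, and since $\delta$ may be taken arbitrarily small, no shrinking is permitted; in this example the comparison quadrilateral required by the lemma must have $d(\bar x_1,\bar x_3)>d(x_1,x_3)$. (A tree with $x_1$ a branch point and $x_2,x_3,x_4$ at the ends of three legs shows the scheme can even be blocked outright by the triangle inequality, and shrinking $e$ when the angle at $\bar x_1$ is obtuse actually increases that angle.) The standard repair is Alexandrov's Lemma (Lemma I.2.16 of \cite{BH}). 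Only one of $\bar x_1,\bar x_3$ can be reflex, say $\bar x_1$; then take the Euclidean triangle $(\tilde x_2,\tilde x_3,\tilde x_4)$ with side lengths $d(x_2,x_3)$, $d(x_3,x_4)$ and $d(x_1,x_2)+d(x_1,x_4)$, and place $\tilde x_1$ on $[\tilde x_2\tilde x_4]$ with $d(\tilde x_2,\tilde x_1)=d(x_1,x_2)$. All four sides are preserved, $d(\tilde x_2,\tilde x_4)=d(x_1,x_2)+d(x_1,x_4)\geq d(x_2,x_4)$ by the triangle inequality in $X$, and $d(\tilde x_1,\tilde x_3)\geq d(\bar x_1,\bar x_3)=d(x_1,x_3)$ by Alexandrov's Lemma; so the non-convex case costs no $\delta$ at all, and the quadrilateral is (degenerately) convex, which is all that the application in Theorem \ref{metric} needs. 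Your parenthetical alternative -- that the other diagonal cut always yields a convex gluing -- would itself require proof; the straightening argument avoids the question entirely.
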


We are now in a position to state our characterization of asymptotically CAT(0) geodesic spaces.  

\begin{theorem}\label{metric} A geodesic metric space is asymptotically CAT(0) if and only if there exists a function $f: \mathbb{R}_+\rightarrow \mathbb{R}_+$ such that $\lim_{r\rightarrow \infty} \frac{f(r)}{r}=0$ and every ball of radius $r$ in $X$ is $f(r)$-CAT(0).\end{theorem}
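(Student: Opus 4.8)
The plan is to prove the two implications separately; in both directions one passes between $X$ and its asymptotic cones while carefully tracking Euclidean comparison triangles.

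\textbf{The metric condition implies asymptotically CAT(0).} Assume $f$ is sublinear and every ball of radius $r$ is $f(r)$-CAT(0); fix an asymptotic cone $X_\omega=Cone_\omega(X,(a_n),(p_n))$ and aim to show it is CAT(0). I would avoid arguing directly with geodesic triangles in $X_\omega$, whose sides need not be ultralimits of geodesics of $X$, and instead use the Bruhat--Tits CN-inequality characterisation of CAT(0) (see \cite{BH}): a complete geodesic space is CAT(0) if and only if for every $x,y$ with midpoint $m$ one has $d(z,m)^2\le\tfrac12 d(z,x)^2+\tfrac12 d(z,y)^2-\tfrac14 d(x,y)^2$ for all $z$. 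So take $x=\lim_\omega x_n$, $y=\lim_\omega y_n$, $z=\lim_\omega z_n$ in $X_\omega$ (we may assume they are not all equal, the remaining case being trivial), let $m_n$ be the midpoint of a geodesic $[x_n y_n]$ of $X$, and set $m=\lim_\omega m_n$, which is readily a midpoint of $x$ and $y$. The points $x_n,y_n,z_n,m_n$ together with the triangle $\triangle(x_n,y_n,z_n)$ all lie in the ball about $z_n$ of radius $R_n:=d(z_n,x_n)+d(x_n,y_n)$, and $R_n/a_n$ converges to the finite number $K:=d_\omega(z,x)+d_\omega(x,y)$, so $R_n\to\infty$ and $R_n\le(K+1)a_n$ for $\omega$-almost all $n$. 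The comparison point of $m_n$ is the midpoint of the corresponding side of the Euclidean comparison triangle, so the $f(R_n)$-CAT(0) inequality for $\triangle(x_n,y_n,z_n)$ together with the Euclidean median-length identity gives
\[ d(z_n,m_n)\le\sqrt{\tfrac12 d(z_n,x_n)^2+\tfrac12 d(z_n,y_n)^2-\tfrac14 d(x_n,y_n)^2}+f(R_n). \]
Dividing by $a_n$ and taking $\lim_\omega$, the error term vanishes since $f(R_n)/a_n\le (K+1)\,f(R_n)/R_n\to 0$ by sublinearity, and we obtain the CN inequality for $m$ in $X_\omega$. A short standard argument (apply the CN inequality with $z$ equal to a second midpoint of $x,y$) shows midpoints in $X_\omega$ are unique; hence the CN inequality holds for the midpoint of every geodesic and $X_\omega$ is CAT(0).

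\textbf{Asymptotically CAT(0) implies the metric condition.} For $r>0$ let $f(r)$ be the supremum of the defects $\sup_{p,q\in\triangle}\bigl(d(p,q)-d(\bar p,\bar q)\bigr)$ over all geodesic triangles $\triangle$ lying in some ball of radius $r$; such a $\triangle$ has diameter $\le 2r$, so $0\le f(r)\le 2r<\infty$, and by construction every ball of radius $r$ is $f(r)$-CAT(0). It remains to show $f(r)/r\to 0$. If not, there are $\varepsilon>0$ and $r_n\to\infty$ with $f(r_n)>\varepsilon r_n$, hence for each $n$ a geodesic triangle $\triangle_n\subseteq B(x_n,r_n)$ with points $p_n,q_n\in\triangle_n$ satisfying $d(p_n,q_n)-d(\bar p_n,\bar q_n)>\varepsilon r_n-1$. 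Form $X_\omega=Cone_\omega(X,(r_n),(x_n))$. Since ultralimits of geodesics are geodesics, the $\triangle_n$ converge to a genuine geodesic triangle $\triangle_\omega$ in $X_\omega$ containing $p_\omega=\lim_\omega p_n$ and $q_\omega=\lim_\omega q_n$; after normalising the Euclidean comparison triangles so that they converge, the rescaled points $\bar p_n/r_n$, $\bar q_n/r_n$ converge to the comparison points of $p_\omega,q_\omega$ in the comparison triangle of $\triangle_\omega$. Passing to $\lim_\omega$ in the rescaled defect yields $d_\omega(p_\omega,q_\omega)-d_{\mathbb{E}^2}(\bar p_\omega,\bar q_\omega)\ge\varepsilon>0$, contradicting that $X_\omega$ is CAT(0).

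\textbf{Where the difficulty lies.} The routine-but-fiddly part is the limiting bookkeeping: that under a rescaled ultralimit the Euclidean comparison triangles and comparison points converge to the comparison data of the limit triangle, so that the ``defect'' behaves continuously. The genuinely important decision is in the first implication: because one has no handle on which geodesics of $X_\omega$ descend from geodesics of $X$, a finitary characterisation of CAT(0) (the CN inequality at midpoints) is needed, and it is precisely the sublinearity of $f$ that makes the scale-$a_n$ error term $f(R_n)/a_n$ disappear in the cone.
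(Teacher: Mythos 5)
Your proof is correct. The converse direction (taking $f(r)$ to be the supremum of the defects $d(p,q)-d(\bar p,\bar q)$, and blowing up a sequence of bad triangles in a cone scaled by the bad radii) is essentially the paper's own argument, down to the same bookkeeping about limits of comparison triangles and comparison points. The forward direction, however, takes a genuinely different route. The paper avoids geodesics in the cone by verifying the four-point (subembedding) characterization of CAT(0): it first extends the $\delta$-CAT(0) inequality from triangles to quadrilaterals (Lemma \ref{four}), so each rescaled $4$-tuple in $X$ admits a Euclidean quadrilateral matching consecutive distances exactly and diagonals up to an error $2f(r_n)$, and then passes to $\omega$-limits inside $Cone_\omega(\mathbb{E}^2)\cong\mathbb{E}^2$ to obtain an exact subembedding of any four points of the cone. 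You instead verify the Bruhat--Tits CN inequality at the midpoint $m=\lim_\omega m_n$ coming from midpoints upstairs, via the Euclidean median formula plus the $f(R_n)$ error, and then upgrade from ``this midpoint'' to ``every midpoint'' by the uniqueness trick (take $z=m'$ in CN to force $d(m,m')=0$); this is legitimate, and it is in fact the same characterization the paper itself invokes later in the proof of Theorem \ref{RelH}. What each buys: your route uses only the triangle version of Definition \ref{maindef} (no quadrilateral lemma) and localizes the sublinearity argument in a single error term $f(R_n)/a_n$, at the cost of the midpoint-uniqueness step and the standard fact that the cone is a complete geodesic space; the paper's route needs Lemma \ref{four} and the four-point criterion, but treats arbitrary $4$-tuples at once without ever discussing midpoints or geodesics of the cone. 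One small slip in your converse direction: a geodesic triangle with vertices in a ball of radius $r$ need not have diameter at most $2r$ (two points on different sides can be roughly $3r$ apart), so the bound should read $f(r)\le 3r$ rather than $2r$; this only affects the finiteness of $f(r)$, which still holds, so the argument is unharmed.
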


\textbf{Caveat.} In the statement above we do not assume that the balls in $X$ are convex. We simply mean that any geodesic triangle in $X$ with vertices in a ball of radius $r$ satisfies the $f(r)$-CAT(0) inequality. 

\begin{proof}[Proof of Theorem \ref{metric}] We first consider the sufficiency statement. Take a 4-tuple of points $(x_1, x_2, x_3, x_4) \in Cone_{\omega}(X) $, where $ x_i = (x_{i,n}) $ for $ i = 1, 2, 3, 4 $. Note that for each $ n $, $(x_{1,n},x_{2,n},x_{3,n},x_{4,n}) $ is a 4-tuple of points in $(X,d_n)$ of some diameter $r_n$. Since any ball of radius $r_n$ satisfies a $f(r_n)$-CAT(0) inequality for triangles, we know that it also satisfies a $f(r_n)$-CAT(0) inequality on quadrilaterals. This implies that there is a 4-tuple of points $(y_{1,n},y_{2,n},y_{3,n},y_{4,n}) $ in the Euclidean plane $\mathbb{E}^2$ such that $d(x_{i,n},x_{i+1,n})=d(y_{i,n},y_{i+1,n})$ for $i=1,2,3,4$, modulo 4 and $d(x_{i,n}, x_{j,n}) \leq d(y_{i,n},y_{j,n}) + 2f(r_n)$ for $1\leq i < j \leq 4$. For each $n$, we may choose $y_{1,n}$ to be the origin.

Then $ y_i = ({y_{i,n}}) $, $ i=1,2,3,4 $ is a 4-tuple of points in $Cone_{\omega}(\mathbb{E}^2)$. We know that the Euclidean plane is isometric to any of its asymptotic cones. The above construction therefore provides us with a 4-tuple of points in $\mathbb{E}^2$ which satisfies $d(x_i,x_{i+1})=d(y_i, y_{i+1})$, for $i=1,2,3,4$, modulo 4, and for $1\leq i < j \leq 4$, $d(x_i,x_j)=\lim_{\omega}d_n(x_{i,n}, x_{j,n}) \leq \lim_{\omega}(d_n(y_{i,n}, y_{j,n}) + 2\frac{f(r_n)}{a_n}) $. But, $\lim_{\omega}\frac{f(r_n)}{a_n}$=
$\lim_{\omega}\frac{f(r_n)}{r_n} \frac{r_n}{a_n}$. By hypothesis,
$\lim_{r \rightarrow \infty} \frac{f(r)}{r} =0$ and further, $\lim_{\omega}\frac{r_n}{a_n}$
is the diameter of the four tuple of points in the asymptotic cone. We therefore
conclude that $d(x_i,x_j) \leq d(y_i,y_j)$ for $1\leq i < j \leq 4$.

Conversely, suppose that all asymptotic cones of a geodesic space $X$ are CAT(0). Define $f(r)$ to be the supremum of the difference between $d(p,q)$ and $d(\bar{p},\bar{q})$, where $p$ and $q$ are points on a geodesic triangle in $X$, whose vertices lie in a ball of radius $r$. We claim that $\lim_{r \rightarrow \infty}\frac{f(r)}{r}=0$. 

Suppose not. Then there exists a non-principal ultrafilter $\omega$ and a sequence $(a_n)$ of positive real numbers such that $\lim_{n \rightarrow \infty}a_n = \infty$ and \[ \textrm{ for some } \epsilon >0,\ \omega( \{n : \frac{f(a_n)}{a_n} >2\epsilon \})=1.\] 
For each $n \in \mathbb{N}$, there exists a geodesic triangle $\Delta_n$, with vertices in a ball of radius $a_n$ in $X$ and a comparison triangle $\bar{\Delta}_n$ for $\Delta_n$ in the Euclidean plane such that a pair $p_n$, $q_n$ of points in $\Delta_n$ satisfies $f(a_n) $ $\geq d(p_n,q_n)- d(\bar{p}_n,\bar{q}_n) $ $\geq f(a_n)-1$. Here, $\bar{p}_n$ and $\bar{q}_n$ are as usual the comparison points for $p_n$ and $q_n$ in $\bar{\Delta}_n$. 

Now consider the asymptotic cone $X_\omega$ of $X$ with respect to the scaling sequence $(a_n)$, ultrafilter, $\omega$ and sequence of base points $(p_n)$. The $\omega$-limit of the triangles $\Delta_n$ is a geodesic triangle $\Delta$ in a ball of radius 1 in $X_\omega$. Observe that if $\bar{\Delta}$ denotes the $\omega$-limit of the triangles $\bar{\Delta}_n$, then $\bar{\Delta}$ is a comparison triangle for $\Delta$ in the Euclidean plane. The comparison points for $\lim_\omega p_n$ and $\lim_\omega q_n$ in $\bar{\Delta}$ are precisely the $\omega$-limits of the sequences $(\bar{p}_n)$ and $(\bar{q}_n)$, respectively.

As $\omega\{n \in \mathbb{N}\ |\ \frac{1}{a_n}< \epsilon\}=1$, we deduce that \[\omega\left\{n\in \mathbb{N}\ |\  \frac{d(p_n,q_n)}{a_n}-\frac{d(\bar{p}_n,\bar{q}_n)}{a_n}\geq \epsilon\right\}=1.\]

Therefore, $d_\omega(\lim_\omega p_n, \lim_\omega q_n)\geq d_\omega(\lim_\omega \bar{p}_n, \lim_\omega \bar{q}_n))+\epsilon$. This contradicts the assumption that $X_\omega$ is CAT(0). We conclude that $\lim_{r\rightarrow \infty}\frac{f(r)}{r}=0$. \end{proof}

Clearly, $\delta$-CAT(0) spaces are asymptotically CAT(0). One may ask if the converse is true. We now present an example of a geodesic space which is asymptotically CAT(0) but not $\delta$-CAT(0) for any $\delta \geq 0$. 

\begin{proposition}[The plane with the wrinkled quadrant] \label{wrinkledplane} There is a metric space $Y$ which is not $\delta$-CAT(0), for any $\delta\geq 0$ but all its asymptotic cones are isometric to a CAT(0) space.\end{proposition}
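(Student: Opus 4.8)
\noindent\emph{Proof proposal.} The plan is to build $Y$ by hand. Begin with the Euclidean plane $\mathbb{E}^2$, leave three quadrants untouched, and perform all the ``wrinkling'' inside the closed first quadrant $Q$. Fix a constant $\rho_0>0$ and a fixed norm $\|\cdot\|$ on $\mathbb{R}^2$ with $\|\cdot\|\le\|\cdot\|_2$ and $\|\cdot\|\ne\|\cdot\|_2$ (for instance an $\ell^p$-norm with some fixed $p>2$), so chosen that the direction $u=(1,1)/\sqrt2$ is strictly ``cheaper'' than in the Euclidean norm. For $n\ge 1$ let $U_n\subset Q$ be the round disc of radius $\rho_0$ about $(2^n,2^n)$; these are pairwise disjoint, they all meet the diagonal ray $L=\{(t,t):t\ge0\}$, and they escape to infinity. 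On each $U_n$ replace the Euclidean metric by the Finsler metric of $\|\cdot\|$, interpolating back to the Euclidean one in a collar inside $U_n$, and let $Y$ be the resulting length space. I would first record the basic features: $Y$ is complete and locally compact, hence geodesic; outside $\bigcup_nU_n$ it is flat, so geodesics there are Euclidean straight segments; and $d_Y\le d_{\mathbb{E}^2}$, with the deficit $d_{\mathbb{E}^2}(x,y)-d_Y(x,y)$ along a $Y$-geodesic of length $\ell$ bounded by $C\cdot\#\{n:\gamma\text{ meets }U_n\}$, which is $O(\log\ell)$ because the $U_n$ are spaced like $2^n$. In particular the whole geometry of $Y$ is Euclidean except along $L$.

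To show $Y$ is not $\delta$-$\mathrm{CAT}(0)$ for any $\delta\ge 0$, I would, for each $N$, exhibit a geodesic triangle $\triangle_N$ with one side running along $L$ through $U_1,\dots,U_N$ (traversed in the cheap direction $u$) and the other two sides far out in the flat part. This long side is shorter than the corresponding Euclidean segment by an amount $\sim N\delta_0$, where $\delta_0>0$ is the fixed per-disc shortening; hence the Euclidean comparison triangle for $\triangle_N$ is displaced by $\sim N\delta_0$ from the honest Euclidean positions of the vertices, and one can choose a point on the long side and a point on another side whose distance in $Y$ exceeds the comparison distance by $\gtrsim N\delta_0$. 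Since $N$ is arbitrary, no constant $\delta$ controls all triangles.

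For the converse I would invoke Theorem \ref{metric}: it is enough to bound, by a sublinear $f(r)$, the $\mathrm{CAT}(0)$-defect of every geodesic triangle $\triangle$ whose vertices lie in a ball $B$ of radius $r$. Such a $\triangle$ has diameter $\le 2r$, so it can only meet those discs $U_n$ within distance $3r$ of the centre of $B$; as the $U_n$ sit at distance $\sim 2^n$ from the origin, there are just $O(\log r)$ of them, and off these discs $\triangle$ is a Euclidean triangle, contributing no defect. Each disc that $\triangle$ does meet perturbs the relevant side lengths and the comparison picture by at most the constant $C$, and these contributions simply add, so $\triangle$ satisfies the $f(r)$-$\mathrm{CAT}(0)$ inequality with $f(r)=O(\log r)$. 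Since $\log r/r\to 0$, Theorem \ref{metric} yields that every asymptotic cone of $Y$ is $\mathrm{CAT}(0)$; moreover the estimate $d_{\mathbb{E}^2}-O(\log d)\le d_Y\le d_{\mathbb{E}^2}$ shows $Y$ lies at sublinear distance from $\mathbb{E}^2$, so each asymptotic cone is in fact isometric to $\mathbb{E}^2$. Together with the previous paragraph, $Y$ is the desired example.

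The crux, and the step I expect to cost the most care, is the choice of the elementary ``wrinkle'' $U_n$ together with the two-sided estimate on its effect. The perturbation must contribute a $\mathrm{CAT}(0)$-defect that is a bounded constant, uniformly over \emph{all} geodesic triangles passing through it, no matter how large: this is precisely why one cannot use a cone point (a triangle encircling a cone point of fixed angle deficit has defect growing linearly with its size, which would force $\limsup_r f(r)/r>0$ and destroy asymptotic $\mathrm{CAT}(0)$-ness), so the wrinkle must carry no concentrated positive curvature, and a mild anisotropy of the metric in a bounded region does this job. At the same time the wrinkles must stay ``visible'' to large triangles and their defects must accumulate along $L$, so that the overall defect is genuinely unbounded. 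Pinning down simultaneously the lower bound ($\gtrsim N$ from $N$ wrinkles) and the matching upper bound (only $O(\log r)$, not $O(r)$, inside a ball of radius $r$) is where the real work of the proof lies.
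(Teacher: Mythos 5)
Your construction differs from the paper's in an essential way: you make distances \emph{shorter} than Euclidean ($d_Y\le d_{\mathbb{E}^2}$, via cheap Finsler discs along the diagonal), whereas the paper's wrinkled quadrant makes them \emph{longer} (ridges of height $1$ over trapezoidal bands, so that crossing many wrinkles costs an extra, divergent, amount which is nevertheless $O(\sqrt{d})$). The asymptotic-cone half of your argument is in the same spirit as the paper's and is fine in outline: a two-sided bound $d_{\mathbb{E}^2}-o(d)\le d_Y\le d_{\mathbb{E}^2}$ forces every cone to be $\mathbb{E}^2$, and the paper proves exactly such a bound (with $O(\sqrt{d})$ in place of your $O(\log d)$) using the projection to the plane. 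You still owe the lower bound, i.e.\ a check that a $Y$-geodesic spends only bounded Euclidean length in each disc so that its total saving is controlled by the number of discs it meets; and the detour through Theorem \ref{metric} with $f(r)=O(\log r)$ is unnecessary and harder than the direct cone argument, since a uniform defect bound over \emph{all} triangles in a ball is essentially the global statement you are trying to establish.

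The genuine gap is in the non-$\delta$-CAT(0) half. Because your wrinkles shorten distances, the defect cannot be read off the way you claim, and the pair you name (a point on the long side together with a point on another side) does not obviously violate the inequality: for such a pair near the diagonal, the $Y$-geodesic joining the two points also runs along the diagonal and collects essentially the same saving $s\sim N\delta_0$ that shortened the side, so $d_Y(p,q)$ drops in step with the comparison distance and the two effects cancel to first order; moreover, shortening the side $AB$ makes the comparison angles at $\bar A$ and $\bar B$ larger, which works against a violation for exactly these pairs. In a ``shortening'' construction the violation really comes from the vertex opposite the shortened side: with $d(\bar A,\bar B)=\ell-s$ and the sides $CA$, $CB$ unchanged, the comparison angle at $\bar C$ decreases, so two points taken at mid-height on $CA$ and $CB$ are closer in the comparison triangle than in $Y$ by an amount comparable to $s$, while their $Y$-distance is genuinely Euclidean provided they sit far enough above the diagonal that no detour to the cheap discs is profitable. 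So your plan is repairable, but it needs this different choice of pair plus a quantitative ``no profitable detour'' lemma, and a check that the claimed sides are actually $Y$-geodesics (refraction at the disc boundaries). The paper's construction avoids all of this: its test triangles $T_n$ have all three sides isometric to the sides of their Euclidean comparison triangles (the wrinkles only lengthen paths crossing them transversally), and the failure of $\delta$-CAT(0) is exhibited directly as the divergent excess of the distance from the origin to the midpoint of the opposite side over its comparison value, with no angle comparison and no cancellation to control.
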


\begin{proof} For each integer $n \geq 2$, take $S_n$ to be the trapezium in the first quadrant of the Euclidean plane, bounded by the $x$-axis, the $y$-axis, and the lines, $x+y=n(n+1)$ and $x+y=n(n-1)$. Now let $P_n$ be a solid with five faces: the base of $P_n$ is the trapezium $S_n$; two isosceles triangles, each of base length $2n$ and side length $\sqrt{n^2 +1}$ form two of the faces. The remaining two faces are trapezia, one with sides $\sqrt{n^2+1}$, $\sqrt{2}n(n+1)$, $\sqrt{n^2+1}$ and $\sqrt{2}n^2$ and the other with sides, $\sqrt{n^2+1}$, $\sqrt{2}n(n-1)$, $\sqrt{n^2+1}$ and $\sqrt{2}n^2$. Note that each prism is of height 1. Attach $P_n$ isometrically along its base to the trapezium $S_n$. Finally, remove the interior of each $P_n$, along with the interior of the base, $S_n$. Give the resulting space $Y$, the induced path metric; it can be loosely described as `the plane with the wrinkled quadrant'. 

We claim that the space $Y$ is not $\delta$-CAT(0) for any $\delta \geq 0$ but all of its asymptotic cones are CAT(0). For each $n \in \mathbb{N}$, let $w_n$ be the point in $Y$ whose original Euclidean coordinates were $\left(\frac{n(n+1)}{2},\frac{n(n+1)}{2}\right)$. Then, the geodesic $\gamma_n$ in $Y$ joining the `origin' to $w_n$ is of length $d_n=\sqrt{2}$ $+\sum_{k=2}^{n}2\sqrt{1+\frac{k^2}{2}}$. 

Consider triangles $T_n$, one for each integer $n\geq 1$, whose vertices are at the origin, and the points $(0,n(n+1))$ and $(n(n+1),0)$. Then, $(\frac{n(n+1)}{2}, \frac{n(n+1)}{2})$ is the mid-point $m_n$ of the side $[(0,n(n+1)),(n(n+1),0)]$ and the Euclidean distance $\bar{d}_n:= d_{\mathbb{E}^2} ((0,0),m_n)$ is exactly $\sum_{k=1}^{n}k\sqrt{2}$. 

The triangle $T_n$, for each $n \geq 1$ therefore coincides along its boundary to its Euclidean comparison triangle. The difference between $d_n$ and $\bar{d}_n$ is given by 
\[ \sum_{k=2}^n\ \left(2\sqrt{\frac{k^2}{2}+1} - \sqrt{2}k\right).\]
The summand is equal to $\frac{4}{\sqrt{2k^2+4}+k\sqrt{2}}$. As this is no smaller than $\frac{4}{k(\sqrt{2}+\sqrt{3})}$ and the harmonic series diverges, $Y$ is not $\delta$-CAT(0), for any $\delta \geq 0$. 

We claim that every asymptotic cone of $Y$ is isometric to the Euclidean plane. Imagine a juxtaposition of $Y$ and $\mathbb{E}^2$ in which $Y$ lies above $\mathbb{E}^2$ and the $x$ and $y$ axes in $\mathbb{E}^2$ coincide with the copy of the axes in $Y$. There is a projection $\pi$ of $Y$ onto $\mathbb{E}^2$ that maps every point in $Y$ to the point in $\mathbb{E}^2$ directly below it. 

We want to estimate the quantity $f(p,q):=|d(p,q)-d(\pi(p),\pi(q))|$. If a path joining two points $p$ and $q$ in $Y$ crosses $n$ wrinkles, then by the triangle inequality, the difference $f(p,q)$ is at most $2n$. On the other hand, any path crossing $n$ wrinkles must travel a distance of at least $k+(k+1)\dots + (n+k)$. Now, $k+(k+1)\dots + (n+k)$ is equal to $(n+k)(n+k+1)/2-k(k+1)/2$, which is no smaller than $n^2/2$. Therefore we see that the quantity $f(p,q)$ is bounded above by a linear function of $\sqrt{d(p,q)}$. 

We deduce from the above discussion that every asymptotic cone of $Y$ is isometric to the Euclidean plane. \end{proof}

\section{Properties of Asymptotically CAT(0) groups}\label{prop}

We first use the metric characterization from the proceeding section to study the finite subgroups of asymptotically CAT(0) groups. Afterwards, we consider the word problem, finiteness properties and nilpotent subgroups of asymptotically CAT(0) groups.

\subsubsection*{Finite Subgroups}

\begin{theorem} \label{finitesubgroup} An asymptotically CAT(0) group $G$ has finitely many conjugacy classes of finite subgroups. \end{theorem}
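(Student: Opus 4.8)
The plan is to adapt the classical argument for CAT(0) groups. There, a finite subgroup $F$ fixes a point (the circumcentre of any orbit), and cocompactness together with properness of the action then force only finitely many conjugacy classes of finite subgroups. In the asymptotically CAT(0) setting there is no genuine fixed point, so the first task is to manufacture a \emph{coarse} circumcentre using the metric characterization of Theorem \ref{metric}. Fix a geodesic space $X$ on which $G$ acts geometrically, a basepoint $x_0 \in X$, a constant $D$ with $X = \bigcup_{g\in G} B(gx_0, D)$, and the sublinear function $f$ provided by Theorem \ref{metric}, so that every geodesic triangle with vertices in a ball of radius $r$ satisfies the $f(r)$-CAT(0) inequality. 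For a finite subgroup $F \le G$ and a point $y \in X$, put $\mathrm{disp}_F(y) = \max_{h \in F} d(hy, y)$; this equals the diameter of the orbit $Fy$, and the circumradius of $Fy$ lies between $\tfrac12 \mathrm{disp}_F(y)$ and $\mathrm{disp}_F(y)$.

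The first step is a quantitative ``approximate uniqueness of circumcentres'' estimate valid inside an $f(r)$-CAT(0) ball. If $c$ and $c'$ are both approximate circumcentres (to within $\eta$) of a finite set $S$ of circumradius $\rho$, and $S$ together with the midpoint $p$ of $[c,c']$ lies in a ball of radius $r$, then applying the $f(r)$-CAT(0) inequality to $p$ and to the point $a \in S$ farthest from $p$ in the triangle $\triangle(c,c',a)$, and invoking the median identity in $\mathbb{E}^2$ for the comparison triangle, yields $d(c,c') \le \sqrt{8\, f(r)\, \rho} + \varepsilon(\eta)$ with $\varepsilon(\eta) \to 0$. Applying this with $S = Fy$ (whose orbit, circumcentre candidates and midpoints lie in a ball of radius $O(\mathrm{disp}_F(y))$) and letting $\eta \to 0$ produces a point $c$ with $d(c, hc) \le \phi\big(\mathrm{disp}_F(y)\big)$ for every $h \in F$ — because $h$ is an isometry permuting $Fy$, so $hc$ is an approximate circumcentre of $h(Fy)=Fy$ of the same quality. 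Here $\phi(R) = O\big(\sqrt{R\, f(O(R))}\big)$, which is \emph{sublinear} precisely because $f$ is. In other words, $\mathrm{disp}_F(c) \le \phi\big(\mathrm{disp}_F(y)\big)$.

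The main obstacle is that $\phi$ is not bounded, so one application of this construction only gives a coarse fixed point whose displacement grows (sublinearly) with the orbit diameter — and $\mathrm{disp}_F(x_0)$ is genuinely unbounded as $F$ ranges over finite subgroups. The remedy is to iterate. Starting from $c^{(0)} = x_0$ and letting $c^{(k+1)}$ be a coarse circumcentre of $F c^{(k)}$ as above, the quantities $\rho_k := \mathrm{disp}_F(c^{(k)})$ obey $\rho_{k+1} \le \phi(\rho_k)$. Passing to the non-decreasing envelope of $\phi$ (still sublinear), there is a constant $M$ with $\phi(t) \le t/2$ for all $t \ge M$; hence $\rho_k$ at least halves at every step until it first drops below $M$, and thereafter never exceeds $C := \max\{M, \phi(M)\}$. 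Consequently every finite subgroup $F$ admits a point $c_F$ with $d(h c_F, c_F) \le C$ for all $h \in F$, where $C$ depends only on $X$, the action and $f$, not on $F$.

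It remains to run the classical endgame. Choose $g_F \in G$ with $d(g_F x_0, c_F) \le D$. For $h \in F$ and $k = g_F^{-1} h g_F$ we get, applying the isometry $g_F$, $d(k x_0, x_0) = d(h g_F x_0, g_F x_0) \le 2 d(g_F x_0, c_F) + d(h c_F, c_F) \le 2D + C$. Thus $g_F^{-1} F g_F \subseteq \{k \in G : d(k x_0, x_0) \le 2D + C\}$, a finite set by properness of the action; since a finite set has only finitely many subsets, there are only finitely many possibilities for $g_F^{-1} F g_F$, hence only finitely many conjugacy classes of finite subgroups of $G$. The delicate part of the whole argument is the second step: obtaining the quantitative circumcentre estimate with the error controlled by $f$ of the correct radius, and recognizing that it is \emph{iteration} of this estimate — not a single pass — that converts the sublinear bound into a bound uniform over all finite subgroups.
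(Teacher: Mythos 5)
Your proposal is correct and follows essentially the same route as the paper: a coarse circumcentre estimate of the form $O\bigl(\sqrt{\rho\, f(O(\rho))}\bigr)$ derived from Theorem \ref{metric}, iterated until the displacement drops below a uniform constant, followed by the standard cocompactness-plus-properness endgame. The only cosmetic difference is that you track a single approximate circumcentre and its displacement, whereas the paper iterates the (nonempty, $H$-invariant) sets of exact circumcentres $C(Y_n)$ with the contraction $r_{C(Y)} \le r_Y/4$ for $r_Y$ large.
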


\begin{proof} Let $Y$ be a non-empty bounded subset of a proper asymptotically CAT(0) metric space. Define

\medskip

$r_Y=\inf\{r>0 | Y \subset B(x,r)$ for some $x \in X\}$, and

\medskip

$C(Y)= \{x \in X | Y \subset B(x,r_Y)\}$.

\medskip

With this notation, $r_Y$ is the (circum)radius of $Y$ and $C(Y)$ is the set of barycentres of $Y$. By a standard argument, $C(Y)$ is not empty. We wish to estimate the diameter of $C(Y)$. Now, by Theorem \ref{metric}, we know there exists a function $f$ and $a>0$ such that if $r>a$, then $f(r) < \frac{r}{32}$.

Choose $x_1$, $x_2$ $\in C(Y)$ and let $\epsilon > 0$ be given. For each $y \in Y$, consider a geodesic triangle $[y, x_1, x_2]$ along with $[O_y, \bar{x}_1, \bar{x}_2]$, its comparison triangle in the Euclidean plane. Suppose $m$ is the midpoint of the geodesic joining $x_1$ and $x_2$. Denote its comparison point in $[O_y, \bar{x}_1, \bar{x}_2]$ by $m_y$. Now, if $d(m_y, O_y) \leq r_Y - f(r_Y) - \epsilon $ for all $y \in Y$, then $d(m,y) \leq r_Y - \epsilon$ for all $y \in Y$ and this violates the definition of $r_Y$. Therefore there must exist some $z \in Y$ for which the distance between the points $O_z$ and $m_z$ exceeds $r_Y -(f(r_Y) + \epsilon)$. Thus, \\
$d(\bar{x}_1,\bar{x}_2)\leq 2\sqrt{d(\bar{x}_1,O_z)^2-d(O_z,m_z)^2} $ $\leq 2\sqrt{r_Y^2- (r_Y -(f(r_Y) + \epsilon))^2}$\\
$\Rightarrow$ $2r_{C(Y)} \leq 2\sqrt{2r_Yf(r_Y)  - f(r_Y) ^2 }$. Hence, if $r_Y > a$, then $r_{C(Y)} \leq \frac{r_Y}{4}$.

Now let $G$ be an asymptotically CAT(0) group with associated space $X$. Note that there is no need to assume that the metric space $X$ is complete. As $G$ acts geometrically on $X$, by Exercise 1 in Section I.8.4 of \cite{BH}, $X$ is a proper geodesic space. This implies that centres exist for bounded subsets of $X$. 

Let $H$ be a finite subgroup of $G$. Fix $x \in X$. Set $Y=Hx$, the orbit of $x$ under the action of $H$. Then $Y$ is a bounded subset of $X$. If $r_Y > a$, then we inductively define a sequence $(Y_n)$ of subsets of $X$, by setting $Y_0$ to be $Y$ and $Y_n$, to be $C(Y_{n-1})$. We deduce from the previous paragraphs that \[\textrm{for\ } n > \log_4{\frac{r_Y}{a}}\ ,\ \textrm{we\ have\ } r_{Y_n}<a.\]

Choose $m$ to be the least such $n$. Note that, by construction, the sets $Y_n$ are invariant under the action of $H$. As the action is geometric, there exists a ball $B(x,D)$ in $X$ such that $G.B(x,D)=X$. Let $\bar{x}$ be an element of $Y_m$. There exists some $g \in G$ with $d(gx,\bar{x})\leq D$. Hence, for any $z \in Y_m$, we have 

$d(g^{-1}z,x) \leq d(g^{-1}z, g^{-1}\bar{x}) + d(g^{-1}\bar{x}, x) \leq 2a + D.$

Let $h \in H$. Then,

$d(g^{-1}hgx,x) $ $\leq d(g^{-1}hgx, g^{-1}hg.g^{-1}z)$ $ + d(g^{-1}hg.g^{-1}z, x)$

$= d(x,g^{-1}z) + d(g^{-1}(hz), x)$.

\medskip

The set $g^{-1}Y_m$ is invariant under the action of $g^{-1}Hg$ and therefore, $g^{-1}(hz)$ $ \in g^{-1}Y_m$. Hence, $d(g^{-1}hgx,x)$ $\leq 2(2a + D) $ and $g^{-1}Hg.x $ $\subset B(x,2(2a + D))$.

The properness of the action of $G$ ensures that there are only finitely many subgroups with the property that the orbit of a point $x$ lies in the $2(2a + D)$-ball around $x$. This proves the theorem. \end{proof}

\subsubsection*{Other Finiteness Properties}\label{classifyingspace}

Let $G$ be an asymptotically CAT(0) group. There exists a space $X$ on which $G$ acts geometrically and such that all asymptotic cones of $X$ are CAT(0). Any CAT(0) space is contractible. The fact that all asymptotic cones of $X$ are contractible has implications for the finiteness properties of $G$. In fact, it implies that $G$ is of type $F_\infty$. 

Given a group $G$, a $K(G,1)$ is a path-connected space whose fundamental group is isomorphic to $G$ and which has a contractible universal covering space. The existence of a `nice' $K(G,1)$ has a central place in algebraic topology. 

\begin{definition} A group $G$ is said to be of type $F_n$ if there exists a CW-complex $K(G,1)$, whose $n$-skeleton is finite.\end{definition}

\noindent \textbf{Remarks} We say that $G$ is of type $F_\infty$ if there is a CW-complex $K(G,1)$ with finitely many cells in each dimension.

\begin{theorem}(Theorem 2.6.D of \cite{TR}) If $G$ is a finitely generated group with a word metric such that all asymptotic cones of $G$ are $n$-connected, then $G$ is of type $F_{n+1}$.\end{theorem}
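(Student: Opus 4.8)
The plan is to reduce the theorem, by induction on dimension, to a combinatorial filling inequality and then to show that the failure of such an inequality would produce a non-trivial spherical cycle in some asymptotic cone.

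\textbf{Reduction to filling in each dimension.} First I would construct, by induction on $k$, a free $G$-CW-complex $Y_k$ of dimension $k$ that is $(k-1)$-connected and has only finitely many $G$-orbits of cells in each dimension. Once $Y_{n+1}$ is built, $Y_{n+1}/G$ has fundamental group $G$, vanishing homotopy in dimensions $2,\dots,n$, and a finite $(n+1)$-skeleton, so completing it to a $K(G,1)$ by attaching cells in dimensions $\ge n+2$ shows that $G$ is of type $F_{n+1}$. One may take $Y_1$ to be a Cayley graph of $G$ (available because $G$ is finitely generated). The passage from $Y_k$ to $Y_{k+1}$ consists of attaching $G$-orbits of $(k+1)$-cells so as to kill $\pi_k(Y_k)$, and this can be done with \emph{finitely many} orbits exactly when $\pi_k(Y_k)$ is finitely generated as a $\mathbb{Z}G$-module; by the Hurewicz theorem this module is the group $Z_k(Y_k)$ of combinatorial $k$-cycles when $k\ge 2$, while for $k=1$ the relevant condition is just that $G$ be finitely presented. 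So the theorem reduces to the claim: for each $1\le k\le n$, if all asymptotic cones of $G$ are $k$-connected then $\pi_k(Y_k)$ is finitely generated over $\mathbb{Z}G$.

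\textbf{An isoperimetric reformulation.} For $\ell>0$ let $Y_k^{\ell}$ be obtained from $Y_k$ by gluing a $(k+1)$-cell along every combinatorial $k$-sphere of mass $\le\ell$; this is still a free $G$-complex with finitely many orbits of cells. An easy argument (using that there are only finitely many $k$-spheres of bounded mass up to the $G$-action) shows that $\pi_k(Y_k)$ is finitely generated over $\mathbb{Z}G$ if and only if $\pi_k(Y_k^{\ell})=0$ for some $\ell$. Suppose this fails for every $\ell$; then for each $\ell$ there is a combinatorial $k$-sphere $\sigma_\ell$ in $Y_k$ which is not a $\mathbb{Z}G$-combination of $k$-spheres of mass $\le\ell$, so in particular $\mathrm{mass}(\sigma_\ell)>\ell$ and $\sigma_\ell$ lies in a metric ball whose radius tends to infinity with $\ell$.

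\textbf{The ultralimit argument, which is the crux.} Fix a non-principal ultrafilter $\omega$, translate each $\sigma_\ell$ so that it meets a fixed basepoint, and form $Cone_{\omega}(Y_k)$ with scaling sequence $a_\ell=\mathrm{diam}(\sigma_\ell)$. The rescaled spheres converge to a Lipschitz map $S^k\to Cone_{\omega}(Y_k)$, i.e. a $k$-cycle in the cone; since the cone is $k$-connected this cycle bounds a Lipschitz $(k+1)$-chain $B$. The hard part — and the step I expect to be the main obstacle — is to transport $B$ back to finite scale: triangulating $B$ finely and using that each small simplex is an $\omega$-limit of configurations of boundedly many cells of $Y_k$ that are themselves filled by $(k+1)$-cells of bounded mass, one assembles, for $\omega$-almost every $\ell$, a $(k+1)$-chain in $Y_k^{\ell_0}$ — with the bound $\ell_0$ depending only on the combinatorics of $B$ and \emph{not} on $\ell$ — whose boundary is $\sigma_\ell$; its volume is polynomially bounded in $a_\ell$, but only finiteness is needed. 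For $\ell\ge\ell_0$ this exhibits $\sigma_\ell$ as a $\mathbb{Z}G$-combination of $k$-spheres of mass $\le\ell_0\le\ell$, contradicting the choice of $\sigma_\ell$. The genuinely delicate points are that the limiting $k$-cycle is non-degenerate and that the ``coning-off'' discretization turns the abstract filling $B$ in the cone into a combinatorial filling downstairs with \emph{uniform} control on the masses of the $(k+1)$-cells used; this is the higher-dimensional analogue of the Papasoglu--Drutu argument that simply connected asymptotic cones force finite presentability, and the case $k=1$ is precisely that argument.
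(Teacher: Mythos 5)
The paper does not actually prove this statement: it is quoted verbatim from Riley's thesis \cite{TR}, so your proposal has to be measured against the known argument (Riley; in dimension $1$ this is the Gromov--Papasoglu--Dru\c{t}u fact that simply connected cones imply finite presentability). Your first two reductions are sound and match the standard setup: building free $G$-complexes $Y_k$ skeleton by skeleton, and observing that finite generation of $\pi_k(Y_k)$ over $\mathbb{Z}G$ is equivalent to $\pi_k(Y_k^{\ell})=0$ for some mass bound $\ell$.

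The genuine gap is exactly where you flag it, and as written the step is circular rather than merely delicate. You take a single sequence of bad spheres $\sigma_\ell$, pass to one cone, fill the limit cycle by a chain $B$, and then claim that each small simplex of a fine triangulation of $B$ ``is an $\omega$-limit of configurations of boundedly many cells that are themselves filled by $(k+1)$-cells of bounded mass.'' But that bounded-mass fillability of small spheres downstairs is precisely the statement you are trying to prove; $k$-connectedness of the cone gives you fillings \emph{in the cone}, i.e.\ only $\omega$-almost-everywhere information after rescaling, and a filling at the rescaled limit does not discretize to fillings at finite scale with a bound $\ell_0$ independent of $\ell$ without further input. There are also two technical holes feeding into this: rescaling by $\mathrm{diam}(\sigma_\ell)$ does not give parametrizations with uniformly bounded Lipschitz constants (mass can grow much faster than $\mathrm{diam}^k$), so the ``limit Lipschitz $k$-cycle'' need not exist or may be degenerate; and your assertion that indecomposable spheres must have diameter tending to infinity is unproved. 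The way the known proof gets around all of this is not to pull back one filling but to establish, by a contradiction argument applied to \emph{all} sequences of spheres, a uniform coarse partitioning statement: there is $\epsilon<1$ and a scale $d_0$ such that every combinatorial $k$-sphere of diameter $d\geq d_0$ decomposes into a uniformly bounded number of spheres of diameter at most $\epsilon d$; iterating this down to bounded scale produces the fillings (and, incidentally, polynomial filling functions). Reworking your crux step along those lines, rather than via direct discretization of $B$, is what is needed to make the proposal complete.
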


Therefore, if all asymptotic cones of $G$ are contractible, then there exists a CW-complex with finitely many cells in each dimension and whose fundamental group is isomorphic to $G$. 

Now, suppose that $G$ is asymptotically CAT(0) and $X$ is as above. The group $G$ is finitely generated and by the $Sv\breve{a}rc$-$Milnor$ $Lemma$ (Proposition I.8.19 in \cite{BH}), $G$ with any word metric, is quasi-isometric to the space $X$. But, by Lemma \ref{qi}, a quasi-isometry induces a bilipschitz homeomorphism at the level of asymptotic cones. Since $X$ is asymptotically CAT(0), all asymptotic cones of $X$ are contractible. This implies that all asymptotic cones of $G$ are also contractible and hence, $G$ is of type $F_{\infty}$. We have therefore the following proposition. 
\begin{proposition}An asymptotically CAT(0) group is of type $F_\infty$.\end{proposition}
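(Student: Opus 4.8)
The statement to prove is that an asymptotically CAT(0) group is of type $F_\infty$. The plan is to chain together three facts that are either stated earlier in the excerpt or are standard, so the proof is essentially a matter of assembling them in the right order.

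First I would recall the setup: if $G$ is asymptotically CAT(0), then by definition there is a geodesic space $X$ on which $G$ acts geometrically and all of whose asymptotic cones are CAT(0); since every CAT(0) space is contractible, every asymptotic cone of $X$ is contractible, in particular $n$-connected for every $n$. Next I would transfer this property from $X$ to $G$: because the action is geometric, the \v{S}varc--Milnor Lemma (Proposition I.8.19 of \cite{BH}) gives that $G$ is finitely generated and that $G$ with any word metric is quasi-isometric to $X$; then Lemma \ref{qi} says a quasi-isometry induces a bilipschitz homeomorphism at the level of asymptotic cones, so every asymptotic cone of $G$ is bilipschitz homeomorphic to an asymptotic cone of $X$, hence contractible, hence $n$-connected for all $n$. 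Finally I would invoke Theorem 2.6.D of \cite{TR} (quoted above): a finitely generated group all of whose asymptotic cones are $n$-connected is of type $F_{n+1}$; applying this for every $n$ yields that $G$ is of type $F_n$ for all $n$, i.e.\ of type $F_\infty$.

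There is essentially no obstacle here — the only point requiring a word of care is that Theorem 2.6.D is stated for a finitely generated group \emph{with a word metric}, so one must first secure finite generation (from \v{S}varc--Milnor) before quoting it, and one must note that the $n$-connectedness of the asymptotic cones of $G$ is independent of the choice of word metric, which follows because different word metrics are bilipschitz and hence induce the same asymptotic cones up to bilipschitz homeomorphism. I would also remark that $F_\infty$ is the conjunction of $F_n$ over all $n$, which is why applying Theorem 2.6.D uniformly in $n$ suffices; no single stronger statement about all cones at once is needed. The whole argument is the three-line chain $X$ asymptotically CAT(0) $\Rightarrow$ cones contractible $\Rightarrow$ (via \v{S}varc--Milnor and Lemma \ref{qi}) cones of $G$ contractible $\Rightarrow$ (via Theorem 2.6.D) $G$ of type $F_\infty$, which is exactly the paragraph preceding the proposition in the excerpt.
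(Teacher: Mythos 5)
Your argument is correct and is essentially identical to the paper's own: both chain the contractibility of CAT(0) cones, the \v{S}varc--Milnor quasi-isometry between $G$ and $X$ together with Lemma \ref{qi}, and Theorem 2.6.D of \cite{TR} applied for every $n$. The extra remarks about securing finite generation first and about independence of the word metric are fine but do not change the route.
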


\subsubsection*{The Word Problem}\label{wordproblem}
\begin{theorem} (Theorem 4.6 in \cite{D}) Let $X$ be a geodesic space. If the isoperimetric function for every asymptotic cone of $X$ is quadratic then the following is true: for every $\epsilon > 0$, there exists $l_{\epsilon}$ such that the `area' of a minimal diagram of boundary length $l$ is at most $l^{2+\epsilon}$, for all $l \geq l_{\epsilon}$. \end{theorem}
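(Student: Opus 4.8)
Write $\delta_X(l)$ for the filling function in the statement, i.e.\ the supremum of the areas of minimal diagrams of boundary length at most $l$ (using whatever notion of area, as in \cite{D}, that makes $\delta_X$ a finite, non-decreasing function). The plan is to deduce the theorem from a single self-improving recursion: there is a constant $K$, depending only on a uniform quadratic isoperimetric constant $C$ for the asymptotic cones of $X$, such that for every $\eta\in(0,\tfrac16)$ there is a threshold $l_0(\eta)$ with
\[
\delta_X(l)\ \le\ \frac{K}{\eta^{2}}\,\delta_X(6\eta l)\qquad\text{for all }l\ge l_0(\eta).
\]
Iterating this about $\log_{1/(6\eta)}(l/l_0)$ times yields $\delta_X(l)\le c_1(\eta)\,l^{\,e(\eta)}$ for $l\ge l_0(\eta)$, where $e(\eta)=\log(K\eta^{-2})/\log(1/(6\eta))\to 2$ as $\eta\to0$. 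Given $\epsilon>0$ I would then choose $\eta$ with $e(\eta)<2+\tfrac\epsilon2$ and absorb $c_1(\eta)$ into a threshold $l_\epsilon$, so that $\delta_X(l)\le l^{2+\epsilon}$ for all $l\ge l_\epsilon$.

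Two ingredients feed the recursion. First I would upgrade the hypothesis to a \emph{uniform} quadratic bound — a single $C$ with $\mathrm{Area}(\gamma)\le C\,\ell(\gamma)^2$ for every loop $\gamma$ in every asymptotic cone of $X$ — by the standard compactness argument: a sequence of cones with isoperimetric constants tending to infinity would, via an ultralimit (an ultralimit of asymptotic cones of $X$ being again an asymptotic cone of $X$), produce a cone with no quadratic bound, contradicting the hypothesis. Second, to prove the recursion for a fixed $\eta$ I would argue by contradiction: if it fails, there are loops $\gamma_n$ in $X$ with $\ell_n:=\ell(\gamma_n)\to\infty$ whose minimal filling area exceeds $\tfrac{K}{\eta^2}\,\delta_X(6\eta\ell_n)$. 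Passing to the asymptotic cone $X_\omega$ with scaling sequence $a_n=\ell_n$ and basepoints on $\gamma_n$, the rescaled loops converge to a loop $\gamma$ of length $\le 1$ in $X_\omega$, which by the uniform bound bounds a disk of area $\le C$.

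I would then triangulate this filling disk so that every triangle has diameter $<\eta$ and the boundary $\gamma$ is cut into arcs of length $\le\eta$; since the disk has area $\le C$, only $N\lesssim C/\eta^2$ triangles and $\lesssim 1/\eta$ boundary arcs are needed. Each vertex of the triangulation is an $\omega$-limit of points of $X$, so $\omega$-almost surely the $1$-skeleton realises at level $n$ as a net of geodesic segments in $X$ in which every triangle has perimeter $<6\eta\ell_n$ and the resulting polygonal approximation of $\gamma$ agrees with $\gamma_n$ up to $\lesssim1/\eta$ bigons, each of length $<6\eta\ell_n$. Filling each of these $\lesssim C/\eta^2$ short loops by a minimal diagram of area $\le\delta_X(6\eta\ell_n)$ and pasting the pieces together gives a diagram for $\gamma_n$ of area at most $\tfrac{K}{\eta^2}\,\delta_X(6\eta\ell_n)$ for a suitable absolute constant $K$ — contradicting the choice of $\gamma_n$ once $n$ is large. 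This establishes the recursion, hence the theorem.

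The step I expect to be the main obstacle is ``realising the cone filling inside $X$''. One has to make precise in what sense the quadratic filling in the cone can be cut into boundedly many pieces of controlled diameter (this is where a mild regularity property of fillings in the cone, or the systematic use of a combinatorial/coarse notion of diagram, is really needed), and one has to check that the finitely many vertices and edges of the triangulation realise \emph{simultaneously} $\omega$-almost surely with the asserted length bounds — in particular handling the matching of the polygonal approximation with the original loop $\gamma_n$ along the boundary. Granted that, the recursion and the optimisation over $\eta\to0$ are routine.
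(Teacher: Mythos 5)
First, a point of reference: the paper does not prove this statement at all — it is quoted verbatim as Theorem 4.6 of Drutu \cite{D} and used as a black box — so there is no internal proof to compare against; your sketch has to be measured against Drutu's argument. In outline your strategy is the same mechanism as in \cite{D}: a scale-reduction inequality $\delta_X(l)\le K(\eta)\,\delta_X(6\eta l)$ for large $l$, obtained by a limiting argument (a sequence of bad loops rescaled by their lengths gives a loop in an asymptotic cone, which is filled quadratically there and the filling is cut into boundedly many small pieces and pushed back to $X$), followed by iteration and optimisation $\eta\to 0$ to reach the exponent $2+\epsilon$. So the route is the right one, not a genuinely different one.

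As a proof, however, there is a genuine gap, and it is precisely the step you flag as ``the main obstacle'': the realisation of the cone filling inside $X$. This is not a routine verification but the mathematical core of the theorem. One must (i) fix a combinatorial notion of diagram/partition for loops in a general geodesic space so that ``area of a minimal diagram'' is defined and satisfies the pasting inequality you invoke, (ii) show that a quadratic filling of the limit loop in $X_\omega$ yields a partition of it into at most $N(C,\eta)$ loops of length at most $\eta$, and (iii) prove that such a partition is realised $\omega$-almost surely at level $n$, with all the finitely many vertices, edges and the boundary matching with $\gamma_n$ controlled simultaneously. Drutu's proof is essentially the careful execution of (i)--(iii); without it your recursion is unproven. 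Moreover, your uniformisation step is not the independent ``standard compactness argument'' you describe: to contradict ``isoperimetric constants of cones tend to infinity'' you must transfer the bounded-area filling of the limit loop from the ultralimit back to the approximating cones, and filling area is not upper semicontinuous under ultralimits without exactly the same transfer machinery; in addition, the fact that an ultralimit of asymptotic cones of $X$ is again an asymptotic cone of $X$ is itself a nontrivial theorem (Drutu--Sapir \cite{DS}), not a formality. This step is load-bearing: if the constants $K(\eta)$ coming out of your fixed-$\eta$ contradiction argument are allowed to depend uncontrollably on the cone that arises, the exponent $e(\eta)=\log K(\eta)/\log(1/(6\eta))$ need not tend to $2$, and the final optimisation collapses. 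So the skeleton is correct and matches the known proof, but the two places where you defer the work are exactly where the theorem lives.
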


As the isoperimetric function for any CAT(0) space is quadratic, this theorem applies to asymptotically CAT(0) spaces. One wonders if the above estimate can be improved to a quadratic bound. Nevertheless the isoperimetric function for any asymptotically CAT(0) group is sub-cubic. In \cite{SG}, Gersten demonstrates that a finitely presented group $G$ has solvable word problem if and only if the isoperimetric function for $G$ is recursive. We conclude from the above discussion that the word problem for asymptotically CAT(0) groups is solvable.

\subsubsection*{Nilpotent subgroups}

\begin{proposition} Every quasi-isometrically embedded nilpotent subgroup of an asymptotically CAT(0) group is virtually abelian. \end{proposition}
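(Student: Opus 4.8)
The plan is to pass to asymptotic cones, identify the cone of the nilpotent subgroup via Pansu's theorem, and derive a contradiction from the fact that non-abelian Carnot groups do not embed into nonpositively curved spaces. Throughout, $H\leq G$ denotes the quasi-isometrically embedded nilpotent subgroup; note that quasi-isometric embeddedness presupposes a word metric on $H$, hence that $H$ is finitely generated.

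First I would make two harmless reductions. A finite-index subgroup of a finitely generated nilpotent group is quasi-isometric to the whole group and is again quasi-isometrically embedded in $G$; since being virtually abelian is unaffected by passage to finite-index subgroups, we may assume $H$ is torsion-free (finitely generated nilpotent groups are virtually torsion-free), and it then suffices to show $H$ is abelian. Next, by the \v{S}varc--Milnor lemma $G$ with a word metric is quasi-isometric to the model geodesic space $X$, and "quasi-isometrically embedded" means precisely that the inclusion $(H,d_H)\hookrightarrow X$ is a quasi-isometric embedding. Running the computation in the proof of Lemma \ref{qi} — keeping all basepoints at the identity and discarding the part about a quasi-inverse — shows that for every non-principal ultrafilter $\omega$ and scaling sequence $(a_n)$ the inclusion induces a bi-Lipschitz embedding $Cone_\omega(H)\hookrightarrow Cone_\omega(X)$. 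By hypothesis $Cone_\omega(X)$ is CAT(0), so $Cone_\omega(H)$ admits a bi-Lipschitz embedding into a CAT(0) space.

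Now I would invoke Pansu's theorem: $Cone_\omega(H)$ is isometric to the Carnot group $G_\infty(H)$ associated to the Malcev completion of $H$, equipped with a Carnot--Carath\'eodory metric, and $G_\infty(H)$ is abelian if and only if $H$ is — the Lie algebra graded by its lower central series is abelian only when the algebra itself is. Thus, assuming $H$ is not abelian, $Cone_\omega(H)$ is a non-abelian Carnot group. On the other hand, a non-abelian Carnot group does not admit a bi-Lipschitz embedding into a CAT(0) space: by Pansu--Rademacher differentiation a Lipschitz map out of a Carnot group is differentiable almost everywhere with differential a homomorphism of Carnot groups, which therefore factors through the abelianization of the source, so it cannot be bi-Lipschitz (the homogeneous dimension strictly drops); the form of this statement valid for general — in particular infinite-dimensional — CAT(0) targets is due to Pauls. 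This contradicts the embedding obtained above, so $H$ must be abelian, and the proposition follows.

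The main obstacle is precisely this last ingredient: one needs non-embeddability of non-abelian Carnot groups into \emph{arbitrary} CAT(0) targets, because the asymptotic cone $Cone_\omega(X)$ of the model space need not be locally compact or finite-dimensional, whereas the classical and easiest form of the differentiation argument yields a contradiction only for embeddings into some $\mathbb{R}^N$. Everything else is routine: the finite-index reduction, the bookkeeping that the induced cone map is genuinely bi-Lipschitz onto the copy of $Cone_\omega(H)$ sitting inside $Cone_\omega(X)$, and the independence of the choice of word metric (all word metrics on $H$ are bi-Lipschitz equivalent, and Pansu's theorem holds for any finite generating set).
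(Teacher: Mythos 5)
Your proposal is correct and follows essentially the same route as the paper: pass to asymptotic cones, note the induced bi-Lipschitz embedding of $Cone_\omega(H)$ into the CAT(0) cone of $X$, identify $Cone_\omega(H)$ as a Carnot group (the paper cites Gromov, you cite Pansu), and conclude by Pauls' theorem that such an embedding forces $H$ to be (virtually) abelian. The extra reductions and your explicit flagging that one needs Pauls' version of the non-embeddability result for arbitrary, possibly non-locally-compact CAT(0) targets are just a more detailed account of the same argument.
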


\begin{proof} Let $H$ be a quasi-isometrically embedded nilpotent subgroup of an asymptotically CAT(0) group $G$. Now, suppose that the group $G$ acts geometrically the space $X$. Then there exists a quasi-isometric embedding of $H$ into $X$. This means that there is a biLipschitz embedding of any asymptotic cone of $H$ into the corresponding asymptotic cone of $X$. 

We know from Gromov's work (see \cite{G}) on groups of polynomial growth that every asymptotic cone of $H$ is a graded Nilpotent Lie group with the left invariant Carnot metric. However, the main theorem of \cite{PS} shows that for asymptotically CAT(0) $X$, no such embedding is possible, unless $H$ is virtually abelian. \end{proof}

\section{Amalgams and HNN Extensions}\label{combination}

In this section, we provide methods for combining asymptotically CAT(0) groups via amalgams and HNN extensions.

\noindent \textbf{Amalgams and HNN Extensions with Isometric Gluing } We now describe techniques to form amalgams and HNN extensions from asymptotically CAT(0) groups. The hypotheses on the amalgamated subgroup in theorem \ref{amalgam} and \ref{hnn} appear restrictive at first. However the conditions are met among others, by finite subgroups with fixed points, virtually cyclic subgroups in CAT(0) groups or by the central infinite cyclic subgroups of co-compact lattices in $\slr$.

\begin{theorem} \label{amalgam} Let $G_1$, $G_2$ and $H$ be groups acting geometrically on asymptotically CAT(0) geodesic spaces $X_1$, $X_2$ and $A$ respectively. Suppose that for $i=1,2$, there exist monomorphisms, $\phi_i:H \rightarrow G_i$ and a $\phi_i$-equivariant isometric embedding $f_i: A \rightarrow X_i$. Then, the amalgam $G= G_1 *_H G_2$ associated to the maps $\phi_i$ acts geometrically on an asymptotically CAT(0) geodesic space. \end{theorem}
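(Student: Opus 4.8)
The strategy is the standard Bass–Serre / tree-of-spaces construction adapted to the asymptotically CAT(0) setting, with the metric characterization of Theorem~\ref{metric} doing the heavy lifting at the end. Let $T$ be the Bass–Serre tree of the splitting $G=G_1*_HG_2$. For each vertex $v$ of $T$ with stabilizer a conjugate $g G_i g^{-1}$, glue in a copy $X_v$ of $X_i$; for each edge $e$ with stabilizer a conjugate of $H$, glue in a copy $A_e\times[0,1]$ of a ``tube'' over $A$, attaching the two ends to the adjacent vertex spaces via the (conjugated) maps $f_i$. Because the $f_i$ are $\phi_i$-equivariant isometric embeddings, these gluings are $G$-equivariant and one obtains a geodesic space $X$ (give it the induced path metric; it is a tree of spaces in the sense of a graph-of-spaces with a genuine metric) on which $G$ acts properly and cocompactly. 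Properness and cocompactness are routine: the quotient is the finite graph of compact spaces $X_i/G_i$ and $A/H\times[0,1]$, and point-stabilizers are finite. So the only real content is showing $X$ is asymptotically CAT(0).

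First I would record the coarse geometry of $X$. Since each $A_e$ is isometrically embedded in each adjacent $X_v$, the tube $A_e\times[0,1]$ adds only a bounded ``thickness'' (namely $1$) on top of what is already present, so in fact one may even take $[0,1]$ to be $\{0\}$ and glue the $X_v$ directly along the isometrically embedded copies of $A$; I will do that to reduce clutter. A geodesic in $X$ between points of $X_v$ and $X_w$ passes, in order, through the sequence of vertex spaces along the unique $T$-geodesic from $v$ to $w$, entering and leaving each intermediate $X_u$ through its copies of $A$; its length is the sum of the lengths of its ``pieces'' inside each vertex space. This gives a quasi-isometry from $X$ to the ``coned-off'' picture and, more usefully, it shows that any geodesic triangle in $X$ whose vertices lie in a ball $B(x_0,r)$ has all three sides contained in the union of vertex spaces meeting $B(x_0,r')$ for $r'$ comparable to $r$.

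Now apply Theorem~\ref{metric} in both directions. By hypothesis each $X_i$ is asymptotically CAT(0), so there is a common sublinear $f$ with every $r$-ball in each $X_i$ being $f(r)$-CAT(0) (take the max of the two functions, and note $A$ being a subspace needs nothing extra). Given a geodesic triangle $\triangle(p,q,s)$ in $X$ with vertices in an $r$-ball, I would use the tree structure to show it is $g(r)$-CAT(0) for some sublinear $g$. The mechanism: the three vertices project to three points of $T$, which span a tripod; the triangle decomposes along the copies of $A$ sitting over the branch points and edges of that tripod into at most boundedly-many-per-vertex-space sub-configurations, each living inside a single vertex space $X_u$ and hence inside an $O(r)$-ball there, so each satisfies an $f(O(r))$-CAT(0) inequality. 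One then reassembles the Euclidean comparison triangle by the usual ``Alexandrov gluing / convexity along the $A$'s'' argument: concatenating the Euclidean comparison pictures for the pieces along the geodesic segments that are the images of the copies of $A$, the total defect accumulates additively over the pieces. Since the number of pieces is bounded independently of $r$ (it is controlled by the combinatorics of a tripod in $T$ and the bounded thickness — this is where cocompactness and the bounded geometry of the gluing are used), the cumulative error is still $O(f(O(r)))$, hence sublinear in $r$. Conversely this needs the ``if'' direction of Theorem~\ref{metric}. Therefore $X$ is asymptotically CAT(0), and $G$ acts geometrically on it.

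\textbf{Main obstacle.} The delicate point is the reassembly step: bounding the accumulated comparison-defect when a triangle is spread across several vertex spaces. Two things have to be controlled simultaneously — that the pieces themselves stay in balls of radius $O(r)$ (so each contributes only $f(O(r))$), and that the number of pieces does not grow with $r$. The second is the subtler one: a geodesic triangle could in principle meet unboundedly many vertex spaces as $r\to\infty$, but each crossing forces a definite amount of length to be spent, and a single side of a triangle in an $r$-ball has length $\le 2r$, so the number of \emph{distinct} vertex spaces a side genuinely ``passes through'' with a long piece is bounded — the many short pieces near the copies of $A$ must be handled by absorbing them into the thickness constant. Making this accounting precise, and checking that gluing Euclidean comparison figures along a shared segment only adds the defects rather than amplifying them (an elementary but careful planar estimate, using Lemma~\ref{four} on the quadrilateral pieces), is the crux of the proof.
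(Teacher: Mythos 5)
Your construction of the space $Z$ and the verification that $G$ acts on it geometrically match the paper. The divergence, and the problem, is in how you try to prove $Z$ is asymptotically CAT(0). You route everything through the metric characterization (Theorem \ref{metric}) and an error-accumulation argument, and the key quantitative claim you need --- that a geodesic triangle with vertices in an $r$-ball decomposes into a number of pieces ``bounded independently of $r$'' --- is not true. A single side of such a triangle has length up to about $2r$, and each passage through an edge space costs only a bounded amount of length (the tube thickness, or even less once you collapse $[0,1]$ to a point as you propose), so the side can genuinely traverse on the order of $r$ distinct vertex spaces as $r\to\infty$; the tripod combinatorics of $T$ bounds the number of branch points, not the number of vertex spaces crossed along a side. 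Your accounting then gives a total comparison defect of roughly (number of pieces)$\times f(O(r))$, i.e.\ something that can be linear in $r$ even when $f$ is sublinear (already $O(r)$ pieces each contributing a constant defect ruins sublinearity), so the hypothesis of Theorem \ref{metric} is not verified and the proof does not close. Gluing comparison figures via Lemma \ref{four} or Alexandrov's lemma does only add defects, but addition over an unbounded number of pieces is exactly what you cannot afford. A secondary point: discarding the $[0,1]$ tube also loses the paper's argument for properness of the $G$-action (elements not preserving a vertex space move every point by at least $2$).

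The paper sidesteps this difficulty entirely by never working coarsely in $Z$: it passes to the asymptotic cone first. One fixes $\omega$ and $(a_n)$, forms $Z_\omega$, and identifies it with a tree of spaces $\mathcal{Z}$ over the $\mathbb{R}$-tree $T_\omega$, whose vertex pieces are the CAT(0) cones $(X_1)_\omega$, $(X_2)_\omega$ and whose edge pieces are $A_\omega$, glued along the isometric embeddings $F_i$ induced by the $f_i$ (this is where the hypothesis that the $f_i$ are genuine isometric embeddings, not just quasi-isometric ones, is used). Each gluing is along a complete, isometrically embedded --- hence convex --- CAT(0) subspace, so the exact gluing theorem for CAT(0) spaces (Theorem II.11.3 of \cite{BH}) applies and produces CAT(0) with zero defect; no error accumulation ever arises, because the sublinear errors in the pieces have already vanished in the limit. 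If you want to salvage your route, you would need a coarse Reshetnyak-type gluing statement strong enough to control the defect uniformly over arbitrarily many pieces, which is precisely the hard analytic content your sketch leaves open; the cone-level argument is the way the paper avoids having to prove any such statement.
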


\begin{proof} The amalgam $G= G_1 *_H G_2$ acts simplicially on a tree $T$ which is unique up to graph isomorphism. The vertices of $T$ are in bijection with the cosets of $G_1$ and $G_2$ in $G$, while the unoriented egdes of $T$ may be identified with the cosets of $H$ in $G$. Given spaces on which the groups $G_1$, $G_2$ and $H$ act geometrically, one asks if there exists a space $Z$, which supports a geometric action of $G$ by isometries. Indeed, there is a well-known construction which serves this purpose, provided the maps $f_i$ and $\phi_i$ in the statement of the theorem exist. 

We present the construction in some detail here, following the treatment in Theorem II.11.18 of \cite{BH}. Start with an equivalence relation $\approx$ on the disjoint union of $G \times X_1$, $G\times [0,1] \times A$ and $G \times X_2$. The equivalence relation $\approx$ is generated by: $(gg_1,x_1)\approx (g, g_1x_1)$, $(gg_2,x_2)\approx (g, g_2x_2)$, $(gh,t, a)\approx (g,t, ha)$, $(g,f_1(a))\approx(g,0,a)$ and $(g,f_2(a))\approx(g,1,a)$ for all $g \in G$, $g_1 \in G_1$, $g_2 \in G_2$, $h \in H$, $x_1 \in X_1$, $x_2 \in X_2$, $a \in A$ and $t \in [0,1]$. 

For $i=1,2$, let $\bar{X}_i$ be the quotient of $G \times X_i$ by the above relation and similarly, let $\bar{A}$ be the quotient of $G\times [0,1] \times A$ by the above relation. Then $\bar{X}_i$ is isometric to $G/G_i \times X_i$; this is because each $g \times X_i$ contains exactly one element from each equivalence class of $G \times X_i$. Similarly, $\bar{A}$ is isometric to $G/H \times [0,1] \times A$. We are now in a position to describe $Z$. Recall that $T$ denotes the Bass Serre tree of $G$. 

The space $Z$ is a tree of spaces with underlying tree $T$ such that the vertex spaces are isomorphic to the $X_i$ and the edge spaces are isomorphic to $A$. More precisely,
\[Z := \frac{\left(G/G_1 \times X_1 \right)\coprod \left(G/H \times [0,1] \times A\right) \coprod\left(G/G_2 \times X_2\right)}{\sim}\]
The relation $\sim$ is given via the canonical surjections $G/H\rightarrow G/G_1$ and $G/H\rightarrow G/G_2$: $(gH,0,a) \sim (gG_1,f_1(a)) \textrm{ and } (gH,1,a)\sim (gG_2,f_2(a))$, for all $gH \in G/H$ and $a \in A$. 

The group $G$ acts by left multiplication on the first component of each of $G \times X_1$, $G\times [0,1] \times A$ and $G \times X_2$. This action is compatible with the gluing and so there is an induced action of $G$ on $Z$. The quotient of $Z$ via this action of $G$ is a compact space obtained via a gluing of $X_1/G_1$, $X_2/G_2$ along $A/H \times [0,1]$. Therefore the action is cocompact. 

If $G_1$, $G_2$ and $H$ act properly on $X_1$, $X_2$ and $A$, respectively, then $G$ acts properly on $Z$. The subgroup of $G$ leaving a copy of $M$, for $M\in \{X_1,X_2,A\}$, fixed is a conjugate of $G_i$  or of $H$. On the other hand, if an element $g$ of $G$ does not leave a copy of $M$ invariant then $g$ maps this copy of $M$ to a different one. Consequently, every point is moved by a distance of at least 2 and hence, the action of $G$ on $Z$ is proper. 

Endowed with the quotient metric, $Z$ is a geodesic space. There is a natural projection $\pi$ from $Z$ to the Bass Serre tree $T$ of $G$, which takes the equivalence classes of $(g,x_1)$, $(g,x_2)$ and $(g,t,a)$ to $(gG_1,0)$, $(gG_2,1)$ and $(gH,t)$, respectively. Moreover $\pi$ is $G$-equivariant. We describe a geodesic $\gamma$ joining the equivalence class $(gG_1,x_1)$ to the equivalence class of $(g'G_2,x_2)$. Recall that $A$ is a proper metric space and the maps $f_i$ are isometries. Therefore there is a point $\bar{x}_1$ in $(gG_1,f_1(A))$ which is closest to the point $(gG_1,x_1)$ in $Z$. Similarly there is a point $\bar{x}_2$ in $(g'G_2,f_2(A))$ which is closest to the point $(g'G_2,x_2)$ in $Z$. The geodesic $\gamma$ then is a concatenation of three geodesics, the first joining $(gG_1,x_1)$ to $\bar{x}_1$ in $(gG_1,X_1)$, the second joining $\bar{x}_1$ to $\bar{x}_2$ in $A \times T$ and the third, joining $\bar{x}_2$ to $(g'G_2,x_2)$ in $(g'G_2,X_2)$. We deduce from this and Bass Serre theory that the action of $G$ on $Z$ is by isometries. 

Our main task now is to show that $Z$ is asymptotically CAT(0). The space $Z$ supports a proper cocompact $G$-action and so, the choice of base point is not crucial. Let $\omega$ be a non-principal ultrafilter and choose a sequence $(a_n)$ of positive real numbers such that $\lim_{n\rightarrow \infty}a_n = \infty$. We will show that the canonical asymptotic cone (see Section \ref{intro}) $Z_\omega:= Cone_\omega(Z,(a_n))$ is a CAT(0) space.

Denote the canonical asymptotic cones of $X_1$, $X_2$, $T$ and $A$ with respect to the non-principal ultrafilter $\omega$ and scaling sequence $(a_n)$ by $(X_1)_\omega$, $(X_2)_\omega$, $T_\omega$ and $A_\omega$, respectively. Observe that the isometric embeddings $f_i$ induce isometries $F_i$ at the level of asymptotic cones. We construct a new space $\mathcal{Z}$ as a tree of spaces with underlying tree, $T_\omega$. 
\[\mathcal{Z} := \frac{\left(T_\omega \times (X_1)_\omega \right)\coprod \left(T_\omega \times A_\omega\right) \coprod\left(T_\omega \times (X_2)_\omega \right)}{\sim},\]
where, $(\underline{t},F_1(\underline{a})\sim (\underline{t},\underline{a}) \sim (\underline{t},F_2(\underline{a}))$, for all $\underline{a} \in A_\omega$ and $\underline{t} \in T_\omega$. 

The proof of the theorem will therefore follow from the next proposition. \end{proof} 

\begin{proposition} The spaces $Z_\omega$ and $\mathcal{Z}$ are isometric. Moreover, $\mathcal{Z}$ is CAT(0).
\end{proposition}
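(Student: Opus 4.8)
The plan is to prove the two assertions separately, treating the isometry $Z_\omega \cong \mathcal{Z}$ as the substantive part and deriving that $\mathcal{Z}$ is CAT(0) from a gluing theorem.

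For the isometry, I would exploit the fact that $Z$ is a tree of spaces over $T$ with a $1$-Lipschitz $G$-equivariant projection $\pi\colon Z\to T$, together with the explicit description of geodesics in $Z$ obtained above: a geodesic between points of two vertex spaces is a concatenation of a geodesic inside the first vertex space to the nearest point of its edge space, a geodesic inside $A\times T$, and a geodesic inside the second vertex space. Taking $\omega$-limits, $\pi$ induces a projection $\pi_\omega\colon Z_\omega\to Cone_\omega(T,(a_n))=T_\omega$, and the $f_i$ induce the isometric embeddings $F_i\colon A_\omega\hookrightarrow (X_i)_\omega$. I would then define $\Psi\colon Z_\omega\to\mathcal{Z}$ by sending $\lim_\omega z_n$ to the point of $\mathcal{Z}$ determined by $\lim_\omega\pi(z_n)\in T_\omega$ together with the $(X_i)_\omega$- or $A_\omega$-coordinate of $z_n$, where the ``type'' of $z_n$ (vertex space of type $X_1$, of type $X_2$, or edge space) is fixed $\omega$-almost everywhere. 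The points to verify are: (i) $\Psi$ is well defined --- the ambiguity coming from choosing representatives whose type or whose ambient vertex space switches along the sequence is exactly absorbed by the gluing relation $\sim$ defining $\mathcal{Z}$; (ii) $\Psi$ is surjective, since $Cone_\omega$ of each of $X_i$, $A$, $T$ is surjected onto by sequences and the pieces reassemble; (iii) $\Psi$ preserves distances --- here one computes $d_\omega$ using the concatenation form of geodesics in $Z$, noting that the unit edge-lengths contribute $o(a_n)$ and so vanish in the cone, and checks agreement with the path metric on $\mathcal{Z}$, in which a point of a vertex-space fibre over $\underline{t}$ can only be joined to material over other points of $T_\omega$ by first descending to the $A_\omega$-slab.

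For the second assertion, I would first record that $(X_1)_\omega$, $(X_2)_\omega$ and $A_\omega$ are CAT(0) because $X_1$, $X_2$, $A$ are asymptotically CAT(0), and that $T_\omega$ is an $\mathbb{R}$-tree (the asymptotic cone of the $0$-hyperbolic space $T$) and hence CAT(0), so that $T_\omega\times A_\omega$ is CAT(0). Next I would observe that $F_i(A_\omega)$ is convex in $(X_i)_\omega$: since $f_i$ is an isometric embedding so is $F_i$, an isometric embedding carries geodesics of the geodesic space $A_\omega$ to geodesics, and geodesics in the CAT(0) space $(X_i)_\omega$ are unique, so $F_i(A_\omega)$ contains the geodesic between any two of its points; similarly $\{\underline{t}\}\times A_\omega$ is convex in $T_\omega\times A_\omega$. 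Then $\mathcal{Z}$ is obtained from $T_\omega\times A_\omega$ by gluing, for each $\underline{t}\in T_\omega$, a copy of $(X_1)_\omega$ and a copy of $(X_2)_\omega$ along the convex subspaces $\{\underline{t}\}\times A_\omega\leftrightarrow F_i(A_\omega)$, so applying Reshetnyak's gluing theorem (II.11.1 of \cite{BH}) --- once for the family of all $(X_1)_\omega$-copies, attached along pairwise disjoint convex slices, and then once more for the $(X_2)_\omega$-copies --- shows that $\mathcal{Z}$ is CAT(0). If one prefers to keep all gluings finite, one can instead reduce the CAT(0) inequality for a single geodesic triangle to the part of $\mathcal{Z}$ over the finite subtree of $T_\omega$ spanned by its three vertices, which meets only finitely many vertex spaces, and conclude by finitely many applications of Reshetnyak.

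I expect the main obstacle to be step (iii) of the isometry: turning the heuristic ``taking asymptotic cones commutes with the tree-of-spaces construction'' into a genuine distance computation. One must track carefully which vertex and edge spaces a geodesic of $Z$ traverses, show that along a sequence the relevant data ($\omega$-almost surely) converge to the data of a single geodesic of $\mathcal{Z}$, and confirm that the contributions of edge-traversals and of nearest-point projections behave as claimed under rescaling. By contrast, the convexity input needed for the CAT(0) conclusion is routine once unique geodesics in CAT(0) spaces are invoked, and the gluing theorem then does the rest.
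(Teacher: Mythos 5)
Your overall strategy is the same as the paper's: the paper also defines the comparison map by projecting a representative sequence to the Bass--Serre tree and to the relevant vertex-space factor (its map $\eta$ is exactly your $\Psi$), and it also obtains the CAT(0) property of $\mathcal{Z}$ by repeated application of the gluing theorem in \cite{BH}. Your treatment of the second half is in fact a little more careful than the paper's: you supply the convexity and completeness of $F_i(A_\omega)$ and address the issue of performing infinitely many gluings, which the paper leaves implicit in the phrase ``applying Theorem II.11.3 of \cite{BH} multiple times.''

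There are, however, two genuine problems with the first half. First, the space you argue about is not the space $\mathcal{Z}$ defined in the statement. The paper's $\mathcal{Z}$ is built from the \emph{product} pieces $T_\omega\times(X_1)_\omega$ and $T_\omega\times(X_2)_\omega$, each glued to the slab along all of $T_\omega\times F_i(A_\omega)$; in that space two points $(\underline{t},x)$ and $(\underline{t}',x)$ with $x\notin F_1(A_\omega)$ are at distance $d_{T_\omega}(\underline{t},\underline{t}')$. You instead attach a separate copy of $(X_i)_\omega$ over each point $\underline{t}\in T_\omega$ and stipulate that such a point ``can only be joined to material over other points of $T_\omega$ by first descending to the $A_\omega$-slab,'' which forces the distance between the same pair of points to be at least $2\,d\bigl(x,F_1(A_\omega)\bigr)$. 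These are different (in general non-isometric) metric spaces, so claims (i)--(iii) are being asserted about a different target than the one in the proposition; before anything else you must fix which model you mean. Second, and more importantly, step (iii) --- that $\Psi$ preserves distances --- is exactly the substantive content of the proposition, and you defer it rather than prove it. The difficulty is not only bookkeeping of the $o(a_n)$ edge contributions: a geodesic of $Z$ between points lying deep in two vertex spaces must exit each vertex space through some translate $g_1f_1(A)$ of the attached edge space, and which translate it uses (and how far the points are from it) affects $d_\omega$ in $Z_\omega$, while the data your map records --- the limit point of $T_\omega$ together with the limit $(X_i)_\omega$-coordinate --- does not obviously determine this. Deciding whether the resulting cone distance agrees with the product-piece metric of the paper's $\mathcal{Z}$, or with the ``descend to the slab'' metric of your model, is precisely the computation that is missing; without it the isometry claim is unproved.
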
 

\begin{proof} We first show that $\mathcal{Z}$ is CAT(0). We have assumed that the spaces $X_i$ and $A$ are asymptotically CAT(0). It follows that $T_\omega \times A_\omega$,  is CAT(0). Now applying Theorem II.11.3 of \cite{BH} multiple times, we see that $\mathcal{Z}$ is CAT(0). 

We now define a map $\eta$ from $Z_\omega$ to $\mathcal{Z}$, which furnishes us with the required isometry. Let $(z_n)\in Z_\omega$. Define $\mathcal{X}_1= \{n \in \mathbb{N}\ |\ z_n \in G/G_1 \times (X_1-f_1(A))\}$, $\mathcal{X}_2= \{n \in \mathbb{N}\ |\ z_n \in G/G_2 \times (X_2-f_2(A))\}$ and $\mathcal{A}= \{n \in \mathbb{N}\ |\ z_n \in G/H \times [0,1] \times A\}$. Then, $\mathcal{X}_1 \coprod \mathcal{A} \coprod \mathcal{X}_2 = \mathbb{N}$. This implies that exactly one of these three sets has $\omega$-measure 1, and so $z_n$ belongs to exactly one of $T \times A$, $G/G_1 \times X_1$ and $ G/G_2 \times X_2$ with $\omega$-measure 1.

Observe that the copy of $(T\times A)_\omega \cong T_\omega \times A_\omega$ in $Z_\omega$ is isometric to the copy of $T_\omega \times A_\omega$ in $\mathcal{Z}$. Therefore, the restriction of $\eta$ to $T_\omega \times A_\omega \subset Z_\omega$ can be taken to be the identity map. 

Now suppose that $\omega(\mathcal{X}_1)=1$. For each $n \in \mathbb{N}$, let $t_n$ be the projection of $z_n$ on to the copy of $T$ in $Z$. Since the projection map decreases distances, $(t_n)$ defines a point in the tree $T_\omega$ in $Z_\omega$. For $n \in \mathcal{X}_1$, define $w_n$ to be the projection of $z_n$ onto $X_1$, otherwise take $w_n$ to be any point in $X_1$. Let $\eta((z_n))=((t_n),(w_n))$. Similarly, define $\eta$ for the case when $\omega(\mathcal{X}_2)=1$. 

Observe that the copies of $(T \times A)_\omega$, $(X_1)_\omega$ and $(X_2)_\omega$ in $Z_\omega$ and $\mathcal{Z}$ are isometric and moreover $\eta$ is a bijection. We deduce that $\eta$ defines an isometry from $Z_\omega$ on to $\mathcal{Z}$. This proves that $Z_\omega$ is CAT(0). \end{proof}

Along the same lines, one can construct HNN extensions of asymptotically CAT(0) groups.  

\begin{theorem}\label{hnn} Let $G$ and $H$ be groups acting properly by isometries on asymptotically CAT(0) spaces $X$ and $A$. Suppose that for $i=1,2$, there exist monomorphisms $\phi_i: H \rightarrow G$ and $\phi_i$-equivariant embedding $f_i:Y \rightarrow X$. Then the HNN extension $G*_H$ acts properly by isometries on an asymptotically CAT(0) space.\end{theorem}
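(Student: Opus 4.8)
The plan is to run the proof of Theorem \ref{amalgam} essentially verbatim, replacing the Bass--Serre tree of the amalgam by that of the HNN extension and carrying along the bookkeeping forced by the stable letter. Write $\Gamma = G *_H$ for the HNN extension with stable letter $t$ determined by the monomorphisms $\phi_1,\phi_2\colon H\to G$ (with the $f_i\colon A\to X$ taken to be $\phi_i$-equivariant isometric embeddings, and the actions of $G$ on $X$ and of $H$ on $A$ taken to be co-compact, in parallel with Theorem \ref{amalgam}; if only properness is assumed, the argument below still produces an asymptotically CAT(0) space, one simply cannot appeal to base-point independence). The Bass--Serre tree $T$ of $\Gamma$ has a single orbit of vertices, in bijection with the cosets $gG$, and a single orbit of edges, in bijection with the cosets $gH$, the edge $gH$ joining $gG$ to $gtG$; the base vertex stabiliser is $G$ and the base edge stabiliser is $H$.

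First I would build the tree of spaces $Z$ over $T$, following Theorem II.11.18 of \cite{BH} as in Theorem \ref{amalgam}, but with one type of vertex space and a $t$-twist along each edge:
\[
Z:=\frac{\bigl(\Gamma/G\times X\bigr)\ \coprod\ \bigl(\Gamma/H\times[0,1]\times A\bigr)}{\sim},\qquad (gH,0,a)\sim(gG,f_1(a)),\quad (gH,1,a)\sim(gtG,f_2(a));
\]
equivalently, the quotient of $\Gamma\times X\ \coprod\ \Gamma\times[0,1]\times A$ by the relation generated by $(g\gamma,x)\approx(g,\gamma x)$ for $\gamma\in G$, $(gh,s,a)\approx(g,s,ha)$ for $h\in H$, $(g,f_1(a))\approx(g,0,a)$ and $(gt,f_2(a))\approx(g,1,a)$. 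Left multiplication on the first coordinate gives a $\Gamma$-action on $Z$ and a $\Gamma$-equivariant projection $\pi\colon Z\to T$. With the quotient path metric, $Z$ is a proper geodesic space on which $\Gamma$ acts geometrically, all of this verified exactly as in Theorem \ref{amalgam}: a geodesic between points of distinct vertex spaces is a concatenation of a geodesic to the nearest edge image, a geodesic across the intervening edge spaces inside a copy of $A\times T$, and a geodesic into the target vertex space (using properness of $A$ and that the $f_i$ are isometric); the stabiliser of a copy of $X$, resp.\ of $A$, is conjugate to $G$, resp.\ to $H$, and acts properly there, while any element not preserving a copy displaces each of its points by a uniform positive amount; and $Z/\Gamma$ is the compact space obtained by gluing $X/G$ to $A/H\times[0,1]$.

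Next I would identify the asymptotic cones of $Z$ as in the proposition following Theorem \ref{amalgam}. Fix a non-principal ultrafilter $\omega$ and a scaling sequence $(a_n)\to\infty$; let $X_\omega$, $A_\omega$, $T_\omega$ be the corresponding canonical asymptotic cones. Then $T_\omega$ is an $\mathbb{R}$-tree, and the $f_i$ induce isometric embeddings $F_i\colon A_\omega\to X_\omega$ with closed convex image ($X_\omega$ is a complete geodesic space and $A_\omega$ is complete). Build the model space $\mathcal{Z}$ as the tree of spaces over $T_\omega$ with vertex spaces $X_\omega$, edge spaces $A_\omega$, and the two ends of each edge glued by $F_1$ and $F_2$ respectively. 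Since $X_\omega$, $A_\omega$ and $T_\omega\times A_\omega$ are CAT(0) and every gluing locus is closed and convex, iterating the gluing theorem (Theorem II.11.3 of \cite{BH}) over $T_\omega$ shows $\mathcal{Z}$ is CAT(0). Then I would define an isometry $\eta\colon Z_\omega\to\mathcal{Z}$: for $(z_n)\in Z_\omega$ the index sets $\{n: z_n\in\Gamma/G\times(X\setminus(f_1(A)\cup f_2(A)))\}$ and $\{n: z_n\in\Gamma/H\times[0,1]\times A\}$ exhaust $\mathbb{N}$ up to the gluing locus, so exactly one has $\omega$-measure $1$; on that piece $\eta$ records the projection of $(z_n)$ to $T_\omega$ together with the projection to $X_\omega$ in the first case, and is the identity on $T_\omega\times A_\omega$ in the second. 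That $\eta$ is a well-defined bijective isometry follows exactly as in Theorem \ref{amalgam}, so every asymptotic cone of $Z$ is CAT(0) and $\Gamma$ acts geometrically on the asymptotically CAT(0) space $Z$.

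The genuinely routine parts --- $Z$ proper and geodesic, the action geometric, and $\eta$ an isometry --- are line-for-line as in Theorem \ref{amalgam}; the one point requiring care, and the only mild obstacle I anticipate, is the bookkeeping introduced by the stable letter. One must check that the twisted identifications $(gH,0,a)\sim(gG,f_1(a))$ and $(gH,1,a)\sim(gtG,f_2(a))$ are compatible with left multiplication by $\Gamma$ and respect adjacency in $T$, so that --- although both endpoints of an edge of $T$ lie in the \emph{same} $\Gamma$-orbit of vertices --- in $Z$ (and in $\mathcal{Z}$ over $T_\omega$) the two ends of an edge are glued into genuinely distinct vertex copies, because $T$ is a tree. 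A conceivable worry is that $F_1(A_\omega)$ and $F_2(A_\omega)$ overlap inside a single copy of $X_\omega$, or that a single copy of $X$ carries infinitely many $G$-translates of $f_1(A)$ and of $f_2(A)$; but this is harmless, since in the tree of spaces each such locus is the gluing set of a distinct edge and is individually closed and convex, so the iterated gluing theorem still applies. With this in place, no new difficulty arises beyond those already handled for amalgams.
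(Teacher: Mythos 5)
Your proposal is correct and follows essentially the same route as the paper, which simply constructs the analogous tree of spaces over the Bass--Serre tree of the HNN extension and says ``conclude as before''; your version spells out the cone identification that the paper leaves implicit. In fact your gluing relation $(gH,1,a)\sim(gtG,f_2(a))$, with the stable letter twisting the second end of each edge, is the correct form of the identification that the paper's displayed formula writes (apparently as a slip) without the $t$.
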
 

\begin{proof}The proof is similar to that of the previous theorem. The only difference lies in the construction of the space $Z$. For an HNN extension $\Gamma$ of $G$ over the subgroup $H$, define $Z$ to be as follows.
\[Z := \frac{\left(\Gamma/G_1 \times \tilde{X} \right)\coprod \left(\Gamma/H \times [0,1] \times A\right)}{\sim} \]
where $(\gamma H, 0, a)\sim (\gamma G, f_1(a)) \textrm{\ and\ } (\gamma H,1,a)\sim (\gamma G,f_2(a))$, for all $a\in A$ and for all $\gamma \in \Gamma$. Conclude as before that $Z$ and hence $\Gamma$ is asymptotically CAT(0). \end{proof}

\noindent \textbf{Amalgams and HNN Extensions along Finite Subgroups } 

\begin{theorem}\label{amalgamfinite} Let $G_1$ and $G_2$ be asymptotically CAT(0) groups and let $C$ be a finite group, endowed with monomorphisms $\phi:C\rightarrow G_1$ and $\psi:C \rightarrow G_2$. Then the amalgam $G:= G_1*_CG_2$ associated to $\phi$ and $\psi$ is also asymptotically CAT(0). Similarly, the class of asymptotically CAT(0) groups is closed under HNN extensions along finite subgroups. \end{theorem}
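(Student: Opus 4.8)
The plan is to reduce the statement to the combination theorems already proved, Theorem~\ref{amalgam} and Theorem~\ref{hnn}, using a \emph{one-point} edge space. The obstruction is that those theorems demand a $\phi_i$-equivariant isometric embedding $A\to X_i$ of the space $A$ on which the amalgamated subgroup acts; when $A$ is a point this is precisely a global fixed point for the relevant copy of $C$ inside $X_i$. A finite group acting on a complete CAT(0) space always has a fixed point, but the spaces furnished by an asymptotically CAT(0) structure are only coarsely non-positively curved (and, as noted above, need not even be complete), so a priori no fixed point exists. I would remove this obstruction by enlarging the spaces.

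Let $G_1$ act geometrically on the asymptotically CAT(0) geodesic space $X_1$; fix a basepoint $x_0\in X_1$, set $F=\phi(C)$ and $L=1+\operatorname{diam}(Fx_0)$. For every coset $gF\in G_1/F$ glue to $X_1$ a star with a new centre $c_{gF}$ joined by an edge of length $L$ to each point of the finite set $gFx_0$, and give the union $X_1^{+}$ the induced length metric. Let $G_1$ act on $X_1^{+}$ extending its action on $X_1$ by $g\cdot c_{g'F}=c_{gg'F}$; this is an action by isometries with $\operatorname{Stab}(c_{g'F})=g'Fg'^{-1}$, so in particular $c_F$ is fixed by $F=\phi(C)$. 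Routine verifications show: $X_1^{+}$ is a proper geodesic space; $G_1$ acts on it properly (point stabilisers are finite) and cocompactly (each $c_{gF}$ lies within $L$ of $X_1$); and, because $L>\operatorname{diam}(Fx_0)$, passing through a star is never a shortcut, so $X_1^{+}$ restricts to $X_1$ with its original metric. Hence the inclusion $X_1\hookrightarrow X_1^{+}$ is a $(1,0)$-quasi-isometry (isometric on $X_1$, with $L$-dense image), and by Corollary~\ref{isometry} it induces isometries of asymptotic cones; so every asymptotic cone of $X_1^{+}$ is CAT(0) and $X_1^{+}$ is asymptotically CAT(0). Construct $X_2^{+}$ from $X_2$ in the same way, with $\psi(C)$ fixing a point $c'$.

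Now take $A$ to be a single point, on which $C$ acts properly and cocompactly (it is compact with finite stabiliser) and which is a CAT(0), hence asymptotically CAT(0), geodesic space. The maps $f_1\colon A\to X_1^{+}$ and $f_2\colon A\to X_2^{+}$ sending the point to $c_F$ and to $c'$ are, respectively, $\phi$- and $\psi$-equivariant isometric embeddings, since those target points are fixed by the images of $C$. Theorem~\ref{amalgam} then yields a geometric action of $G_1*_CG_2$ on an asymptotically CAT(0) geodesic space, proving that $G_1*_CG_2$ is asymptotically CAT(0). For an HNN extension of $G_1$ determined by monomorphisms $\phi_1,\phi_2\colon C\to G_1$, one applies the same star construction twice to $X_1$ --- once over the $G_1$-orbit of cosets of $\phi_1(C)$, once over those of $\phi_2(C)$ --- producing a single asymptotically CAT(0) space carrying a point $c^{(1)}$ fixed by $\phi_1(C)$ and a point $c^{(2)}$ fixed by $\phi_2(C)$, and then invokes Theorem~\ref{hnn} with the one-point edge space and $f_i(\mathrm{pt})=c^{(i)}$. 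The one step requiring genuine care is the assertion that $X_1^{+}$ is asymptotically CAT(0); this is exactly where the choice of spoke length $L$ --- large enough to kill shortcuts, yet bounded, so that the star points stay coarsely dense in $X_1$ --- is used, via Corollary~\ref{isometry}. Everything else parallels the corresponding facts for CAT(0) groups.
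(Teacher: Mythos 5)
Your argument is correct, and it reaches the theorem by a genuinely different reduction than the paper, although both hinge on the same key fact, Corollary \ref{isometry}. The paper never modifies the vertex spaces: it builds the $G$-space directly as $Z$, gluing the finite edge pieces $G/C\times[0,1]\times C$ onto $G/G_1\times X_1$ and $G/G_2\times X_2$ along the orbit points $\phi(c)x_1$ and $\psi(c)x_2$, then observes that $Z$ is $(1,\epsilon)$-quasi-isometric (with $\epsilon$ controlled by the diameters of $C.x_1$ and $C.x_2$) to the tree of spaces occurring in the proof of Theorem \ref{amalgam}, and concludes via Corollary \ref{isometry}; so it must appeal to the proof, not merely the statement, of Theorem \ref{amalgam}. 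You instead push the bounded error into the vertex spaces: coning off each coset orbit $g\phi(C)x_0$ by a star of uniform spoke length manufactures genuine fixed points for the conjugates of $\phi(C)$, the augmented space $X_1^{+}$ is isometrically embedded and $L$-densely contained in itself from $X_1$ (your choice of $L$ kills shortcuts), hence $(1,0)$-quasi-isometric to $X_1$ and still asymptotically CAT(0), and Theorems \ref{amalgam} and \ref{hnn} then apply verbatim with a one-point edge space; this black-box reduction is the payoff, bought at the cost of the (routine, and correctly identified) verifications that $X_1^{+}$ is a proper geodesic space carrying a geometric $G_1$-action --- properness of the original action guarantees each attaching point meets only finitely many spokes, so properness and local compactness survive. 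One small point worth making explicit in your write-up: to get that \emph{every} asymptotic cone of $X_1^{+}$, for arbitrary basepoint sequences, is CAT(0), use that each point of $X_1^{+}$ lies within $L$ of $X_1$ (or the cocompactness of the action), so the isometries of cones provided by Corollary \ref{isometry} account for all choices of basepoints; this is the same convention the paper uses when it works with canonical cones.
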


\begin{proof} Let $G_1$, $G_2$ and $C$ be as above; let $X_1$ and $X_2$ be the asymptotically CAT(0) spaces associated to $G_1$ and $G_2$ respectively. To prove the theorem, we need to construct an asymptotically CAT(0) space which supports a geometric $G$-action. 

Fix $x_1 \in X_1$ and $x_2 \in X_2$. Let $H_1$ be the stabilizer of $x_1$ in $G_1$. Likewise, denote the stabilizer of $x_2$ in $G_2$ by $H_2$. For fixed $x_1 \in X_1$, the set map $G_1 \rightarrow G_1/H_1$ gives a canonical map $\pi_1: G_1 \rightarrow G_1.x_1$ from the group $G_1$ to the orbit $G_1.x_1$. Similarly, there exists a natural map $\pi_2: G_2 \rightarrow G_2.x_2$. Define  
\[Z := \frac{\left(G/G_1 \times X_1 \right)\coprod \left(G/C \times [0,1] \times C\right) \coprod\left(G/G_2 \times X_2\right)}{\sim}\]
where $(\gamma C,0,c)\sim (\gamma G_1, \pi_1 \circ \phi(c)) \textrm{\ and\ } (\gamma C,1,c)\sim (\gamma G_2, \pi_2 \circ \psi(c))$, for all $c\in C$ and for all $\gamma \in G$. 

By a similar argument as before, the amalgam $G$ acts properly and co-compactly on $Z$ by isometries. We claim that $Z$ is asymptotically CAT(0). There is a natural projection of $Z$ to the Bass Serre tree $T$ of $G$. Consider the tree of spaces $\tilde{Z}$ with underlying tree $T$ and vertex spaces $X_1$ and $X_2$; that is, a vertex of the form $gG_1$ of $T$ corresponds to a copy of $X_1$ and a vertex of the form $gG_2$ of $T$ corresponds to a copy of $X_2$. Observe that $Z$ is $(1,\epsilon)$-quasi-isometric to $\tilde{Z}$, where $\epsilon$ depends solely on the diameter of $C.x_1$ in $X_1$ and the diameter of $C.x_2$ in $X_2$. 

It follows from the proof of Theorem \ref{amalgam} that $\tilde{Z}$ is asymptotically CAT(0). Moreover, a $(1,\epsilon)$ quasi-isometry induces an isometry at the level of asymptotic cones. Hence, the space $Z$ is asymptotically CAT(0). \end{proof}

\section{Relative Hyperbolicity}\label{relative}
The aim of this section is to prove the following theorem: 

\begin{theorem} \label{RelH} If a group $G$ is relatively hyperbolic with respect to a subgroup $H$ and $H$ is asymptotically CAT(0), then $G$ is asymptotically CAT(0). \end{theorem}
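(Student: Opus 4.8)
The plan is to build a geodesic space for $G$ by a "tree of spaces" construction modelled on the standard picture of relative hyperbolicity, and then to identify the asymptotic cones of this space with a tree-graded space all of whose pieces are CAT(0). Recall the theorem of Drutu--Sapir: $G$ is hyperbolic relative to $H$ if and only if every asymptotic cone of $G$ is tree-graded with respect to the collection of $\omega$-limits of cosets of $H$. Concretely, one takes a Cayley graph of $G$ (with respect to a finite generating set) and attaches, to each left coset $gH$, a copy of a geodesic space $A$ on which $H$ acts geometrically and which is asymptotically CAT(0) (such an $A$ exists by hypothesis). Call the resulting space $Y$. A coning-off/combinatorial-horoball argument, or the machinery of \cite{D}, shows $Y$ is a proper geodesic space on which $G$ acts geometrically, and that $Y$ is again hyperbolic relative to the family of embedded copies of $A$.

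First I would record the structure of an arbitrary asymptotic cone $Y_\omega$ of $Y$. By the relative hyperbolicity of $Y$ (with respect to the $A$-pieces) and the Drutu--Sapir characterization, $Y_\omega$ is tree-graded with respect to the collection $\mathcal{P}$ of $\omega$-limits of the attached copies of $A$. Each such limit piece is isometric to an asymptotic cone of $A$ (one must check the scaling sequences and basepoints match up, using that the copies of $A$ sit isometrically and $H$ acts cocompactly on $A$), hence is CAT(0) by the hypothesis that $A$ is asymptotically CAT(0). So $Y_\omega$ is a space that is tree-graded with pieces that are all CAT(0) and complete geodesic.

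The final step is the purely metric statement: a complete geodesic space that is tree-graded with respect to a collection of convex, complete, CAT(0) pieces is itself CAT(0). This is the technical heart and the step I expect to be the main obstacle. The idea is that tree-graded spaces are "glued along points", so geodesics between two points decompose canonically as concatenations of geodesics inside successive pieces and points of the "transversal tree" structure; one then verifies the CAT(0) four-point (or comparison-triangle) inequality by an induction on the number of pieces a geodesic triangle meets, using convexity of the pieces and the fact that cut-points force comparison triangles to degenerate appropriately — the model case being that gluing two CAT(0) spaces along a point yields a CAT(0) space (Theorem II.11.1/II.11.3 of \cite{BH}), extended to possibly infinitely many pieces arranged in a tree-like pattern. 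Once this lemma is in hand, every asymptotic cone $Y_\omega$ is CAT(0), so $Y$ is asymptotically CAT(0); since $G$ acts on $Y$ geometrically, $G$ is asymptotically CAT(0), completing the proof.
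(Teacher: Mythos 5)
Your overall architecture is the same as the paper's: build a geodesic space $Y$ with a geometric $G$-action from the coset/attaching construction, invoke Drutu--Sapir (Theorems 1.11 and 5.1 of \cite{DS}) to see that every asymptotic cone $Y_\omega$ is tree-graded with respect to the $\omega$-limits of the attached copies, and observe that each piece is CAT(0) by hypothesis. The gap is exactly at the step you yourself flag as the main obstacle: the claim that a complete geodesic space which is tree-graded with CAT(0) pieces is CAT(0) is the heart of the theorem, and the argument you sketch for it does not go through as stated. An induction on the number of pieces a geodesic triangle meets has nothing to run on, since a geodesic in a tree-graded space can intersect infinitely many pieces and can also travel along transversal-tree directions not contained in any piece; consequently the space is not presented as a finite (or even locally finite) iteration of the point-gluings covered by Theorem II.11.1/II.11.3 of \cite{BH}, and the passage from the two-space gluing theorem to ``infinitely many pieces arranged in a tree-like pattern'' is precisely what needs to be proved, not a routine extension.

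The missing ingredient is the structural property of tree-graded spaces that makes the verification finite: every simple geodesic triangle is contained in a single piece (property $(T_2)$ in \cite{DS}). This reduces an arbitrary geodesic triangle $ABC$ in $Y_\omega$ to the form $A'B'C'\cup AA'\cup BB'\cup CC'$, where $A'B'C'$ lies in one CAT(0) piece and $AA'$, $BB'$, $CC'$ are geodesic tails. The paper completes the proof from there by a direct metric verification: it checks the Bruhat--Tits (CN) inequality for this ``triangle with three tails'' configuration via an explicit Euclidean comparison computation, split into the cases $r=0$ and $r>0$ for the tail at the apex. Your proposal would be complete if the induction/gluing sketch were replaced by this reduction together with such a concrete comparison argument (or some other genuine proof of the key lemma); as written, the crucial lemma is asserted rather than proved.
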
 

Relatively hyperbolic groups were first introduced by Gromov in \cite{MG} and later studied by Farb in \cite{farb} and Bowditch in \cite{bow} among others. The motivating examples were the fundamental groups of complex hyperbolic manifolds with cusps. The presence of the cusp subgroups ensure that these groups are not negatively  curved. We refer the reader to \cite{DS} for an introduction to relative hyperbolicity. 

\begin{proof} [Proof of Theorem \ref{RelH}] Let a group $G$ be hyperbolic relative to an asymptotically CAT(0) subgroup $H$. There exists a geodesic space $X$ such that all asymptotic cones of $X$ are CAT(0) and $H$ acts geometrically on $X$. In \cite{thesis} and \cite{DG} one can find a description of a geodesic metric space $Y$ which supports a geometric $G$-action. We need to show that all asymptotic cones of $Y$ are CAT(0). 
Let $\omega$ be a non-principal ultrafilter and $(a_n)$, a sequence of positive real numbers such that $\lim_{n\rightarrow \infty}a_n = 0$. Choose for each $n \in \mathbb{N}$, a point $p_n \in Y$. Set $\mathcal{X}=\left\{Cone_{\omega}(Z)\ |\ Z \in \Xi\right\}$, where $Cone_{\omega}(Z)$ refers to the $\omega$-limit of $Z$ in $Cone_{\omega}(Y, (a_n), (p_n))$ and $\Xi$ is in bijection with the cosets of $H$ in $G$.

Clearly, $Y_{\omega}$ is a complete geodesic space. As the action of $G$ on $Y$ is proper and co-compact, $G$ is quasi-isometric to $Y$. By Theorem 1.11 of \cite{DS}, $G$ is asymptotically tree graded with respect to the cosets of $H$. Moreover, Theorem 5.1 in \cite{DS} states that the property of being asymptotically tree-graded is  preserved under quasi-isometries. It follows, that $Y$ is asymptotically tree-graded with respect to $\mathcal{X}$. 

The asymptotic cone $Y_{\omega}$ is tree-graded with respect to $\mathcal{X}$ and by hypothesis, each piece is CAT(0). We know that every simple triangle in $Y_{\omega}$ is contained in a piece. Hence, we may assume that our triangle $ABC$ in $Y_{\omega}$ has the form $A'B'C' \cup AA'\cup BB' \cup CC'$, where $A'B'C'$ is a geodesic triangle that lies in some piece of the asymptotic cone while $AA'$, $BB'$ and $CC'$ are simply geodesics. 

We will use the `Bruhat-Tits' inequality for CAT(0) spaces. This says that a geodesic space $\mathcal{M}$ is CAT(0) if and only if for all triples $(p,q,r) \in \mathcal{M}^3$ and all $m \in \mathcal{M}$ with $d(q,m)=d(m,p)=d(p,q)/2$, we have $d(p,q)^2+d(q,r)^2$ $\geq 2d(m,p)^2+ d(q,r)^2/2$.

To show that the triangle $ABC$ satisfies the CAT(0) property, take $M$ to be the midpoint of the side $BC$. The case when $M$ lies on the geodesic $BB'$ or the geodesic $CC'$ is trivial. So assume that $M \in B'C'$. There is a comparison triangle $A_1'B_1'C_1'$ with comparison point $M_1$ on $B_1'C_1'$ for $M$. Since each piece of the asymptotic cone is CAT(0) we have $d(A', M)\leq d(A_1',M_1)$.

Let $\bar{A}\bar{B}\bar{C}$ be a comparison triangle for $ABC$ with comparison point $\bar{M}$ for $M$. Let $a=d(A',B')$, $b=d(A',C')$, $c=d(B',C')$, $x=d(M,B')$, $r=d(A,A')$, $p=d(B,B')$, $q=d(C,C')$, $h=d(\bar{A},\bar{M})$ and $h'=d(A_1',M_1)$. We know that $d(A,M) = $ $d(A,A') + d(A',M)\leq h'+r$. Hence it suffices to prove that $h'+r \leq h$.

\noindent \textbf{Case 1. The value of $r$ is 0. } Note that $x=(c+q-p)/2$, so using the Cosine Law,
\[ h'^2= \frac{2a^2+2b^2-c^2}{4} + \frac{(q-p)^2}{4} + \left(\frac{q-p}{2}\right)\left(\frac{b^2-a^2}{c}\right).\quad \quad \quad \quad \quad\quad \quad \quad \quad \quad\]
On the other hand, \[h^2 = \frac{2a^2+2b^2-c^2}{4}+\frac{(q-p)^2}{4}+\frac{4ap+4bq-2pc-2cq}{4}.\quad \quad \quad \quad \quad \quad \quad \quad \quad \]
\[\textrm{Hence, } h^2-h'^2 = \frac{p (b^2-(a-c)^2)+ q(a^2-(b-c)^2)}{2}.\quad \quad \quad \quad \quad \quad\quad  \quad \quad \quad\quad\]That the final expression is non-negative is a consequence of the triangle inequality for $A'B'C'$.

\noindent \textbf{Case 2. The value of $r$ is not zero. } By case 1, the result holds for the triangle $A'BC$. Now, let $\alpha = d(A',B)$, $\beta=d(A', C)$ and $\gamma=d(B,C)$. Further, set $\alpha' = \alpha + r$ and $\beta'=\beta + r$.

Then, \[h'+r= \sqrt{\frac{2\alpha^2 + 2\beta^2-\gamma^2}{4}}+r, \textrm{ and\ \ }h= \sqrt{\frac{2\alpha'^2 +2\beta'^2-\gamma^2}{4}}.\]
Manipulating the above two expressions, one reduces the inequality $h'+r \leq h$ to $\sqrt{2\alpha^2 + 2\beta^2-\gamma^2}\leq\alpha + \beta$ or equivalently to $(\alpha+\beta)^2 - \gamma^2 \leq 0$. This again is a consequence of the triangle inequality. This proves that $Y_{\omega}$ is CAT(0). \end{proof}

Theorem \ref{RelH} and Theorem B in \cite{TE} together imply the corollary below. 

\begin{corollary}\label{systolic} Let $X$ be a systolic complex with the Isolated Flats Property and $G$, a group acting cocompactly and properly discontinuously on $X$. Then the group $G$ is asymptotically CAT(0). \end{corollary}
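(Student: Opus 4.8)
The plan is to combine two external inputs: the main structural result on systolic complexes with isolated flats (Theorem B of \cite{TE}) together with Theorem \ref{RelH}. First I would recall from \cite{TE} that a systolic complex $X$ with the Isolated Flats Property, together with a cocompact properly discontinuous action of $G$, forces $G$ to be hyperbolic relative to a collection of subgroups, each of which stabilizes one of the isolated flats. Since an isolated flat is a maximal flat $\mathbb{E}^k$ sitting inside $X$, the peripheral subgroups are virtually abelian (indeed, they act geometrically on Euclidean space up to finite index), hence act geometrically on $\mathbb{E}^k$, which is CAT(0) and therefore asymptotically CAT(0).

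Next I would invoke Theorem \ref{RelH}: since $G$ is relatively hyperbolic with respect to subgroups that are asymptotically CAT(0), $G$ itself is asymptotically CAT(0). Strictly speaking, Theorem \ref{RelH} is stated for relative hyperbolicity with respect to a single subgroup $H$, whereas the systolic setting produces finitely many peripheral subgroups; so the one genuine gap to address is that the argument of Theorem \ref{RelH} works verbatim for a finite family of peripheral subgroups. This is routine: the proof there only uses that every asymptotic cone of $Y$ is tree-graded with pieces drawn from the asymptotic cones of the peripheral spaces, and that each such piece is CAT(0); with several peripheral subgroups the collection of pieces is just a finite union of such families, and the cut-point/simple-triangle argument is unchanged. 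I would state this extension as a remark or fold it into the one-line proof.

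The proof itself is therefore short: quote \cite{TE} to get relative hyperbolicity with virtually abelian peripheral subgroups, observe these are CAT(0) groups hence asymptotically CAT(0), and apply the (mild multi-peripheral generalization of) Theorem \ref{RelH} to conclude. The main obstacle — really the only point requiring care — is making sure the relative hyperbolicity structure coming out of \cite{TE} has peripheral subgroups that are genuinely asymptotically CAT(0); once one knows the flats are standard Euclidean flats and their stabilizers act cocompactly on them, this is immediate, but it is worth spelling out since the Isolated Flats Property is exactly what guarantees the flats (and hence their stabilizers) are well-behaved.

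\begin{proof}[Proof of Corollary \ref{systolic}] By Theorem B of \cite{TE}, the group $G$ is hyperbolic relative to the stabilizers of the isolated flats of $X$. Each such stabilizer acts geometrically on a Euclidean flat $\mathbb{E}^k$, and is therefore a CAT(0) group; in particular it is asymptotically CAT(0). The proof of Theorem \ref{RelH} applies equally when $G$ is relatively hyperbolic with respect to a finite family of peripheral subgroups, since it relies only on the fact that every asymptotic cone of the associated space is tree-graded with each piece isometric to an asymptotic cone of one of the peripheral spaces, and each such piece is CAT(0). Hence $G$ is asymptotically CAT(0). \end{proof}
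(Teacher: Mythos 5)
Your proposal is correct and follows exactly the paper's route: the paper derives Corollary \ref{systolic} simply by combining Theorem B of \cite{TE} (relative hyperbolicity with respect to the virtually abelian flat stabilizers) with Theorem \ref{RelH}, offering no further written proof. Your extra remark that Theorem \ref{RelH}, though stated for a single peripheral subgroup $H$, extends verbatim to a finite peripheral family is a sensible clarification of a point the paper leaves implicit.
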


\begin{question} Are all systolic groups, asymptotically CAT(0)? \end{question}

\section{The Universal Cover of $SL(2,\mathbb{R})$}\label{PSL}

Virtually, co-compact lattices in $\slr$ are fundamental groups of $T^1(S)$, where $T^1(S)$ denotes the unit tangent bundle of a closed surface $S$ of genus at least 2. Co-compact lattices in $\slr$ are not hyperbolic because they contain free abelian subgroups of rank 2. Moreover they cannot act properly by semisimple isometries on any CAT(0) space. This is explained in the proof of Theorem II.7.26 of \cite{BH}.

In this section we consider $\slr$, endowed with the Sasaki metric, which is a left invariant Riemannian metric and show that it is asymptotically CAT(0). It is well known that $PSL(2,\mathbb{R})$ acts on the hyperbolic plane by isometries. This action can be used to identify $PSL(2,\mathbb{R})$ with the unit tangent bundle $T^1(\mathbb{H}^2)$ of the hyperbolic plane. The universal cover of the latter is then $\slr$.

The tangent bundle $TM$ of a Riemannian manifold $M$ can be given a Riemannian metric. There is therefore an induced Riemmanian metric on the unit tangent bundle $T^1(M)$ of $M$. Unit tangent bundles of Riemannian manifolds were studied in some detail by Sasaki in \cite{S1} and \cite{S2}. 

\noindent \textbf{Convention } The word `geodesic' will refer to curves in a Riemannian manifold with constant speed parametrization in this section alone. More precisely, if $M$ is a Riemannian manifold and $\nabla$ is its Riemannian connection, then a geodesic in $M$ is a curve $\gamma$ such that $\nabla_{\frac{d\gamma}{dt}}\frac{d\gamma}{dt}=0$.

\subsection{The Sasaki Metric on $T(M)$}

Let $M$ be an $n$-dimensional Riemannian manifold and let $TM$ be its tangent bundle. Consider the projection $\pi :TM \rightarrow M$ that sends a point $\theta = (x,v)$ in $TM$ to the point $x \in M$. The map $\pi$ is a Riemannian submersion. The kernel $V(\theta)$ of its differential is made of vectors in the fibre $T_{\theta}TM$ at $\theta$, that are tangent to the fibre $T_x(M)$ at $x$. These vectors are said to be `vertical'. On the other hand, the vectors which are orthogonal to the fibre at $x$ are the `horizontal' vectors. These are denoted $H(\theta)$ and more formally, they form the kernel of the covariant map.

A curve $\sigma$ in $TM$ is given by the pair $(\alpha(t), v(t))$, where $\alpha$ is a path in $M$ and $v(t)$ is a vector field along $\alpha$. If $V$ is an element of $T_{\theta}TM$, then $V$ comes from an infinitesimal path $\sigma: (-\epsilon, \epsilon) \rightarrow TM$, which satisfies $\sigma'(0)=V$. One may now compute the covariant derivative of the vector field $v(t)$ along $\alpha'$. This measures the rate at which $v(t)$ varies from the tangent vector to the curve $\alpha$.

The covariant map $K_{\theta}$ at the point $V$ is defined to be $(\nabla_{\alpha'}v)(0)$, where $\nabla$ denotes the Riemannian connection of $M$. One can show that $H(\theta)$ is precisely the kernel of the covariant map, that the linear map $d_{\theta}\pi$ gives an isomorphism of $T_xM$ with $H(\theta)$ while $K_{\theta}$ gives a linear isomorphism of $V(\theta)$ with $T_xM$. Moreover, the vector space $T_{\theta}TM$ is a direct sum of $H(\theta)$ and $V(\theta)$.

One defines the Sasaki metric $\left\langle \left\langle .,.\right\rangle\right\rangle_{\theta}$ on $T_{\theta}TM$ so that these two components are orthogonal. For $V$ and $W$ in $T_{\theta}TM$, 
\[\left\langle \left\langle V,W \right\rangle\right\rangle_{\theta}:=\left\langle d_{\theta}\pi(V), d_{\theta}\pi(V)\right\rangle_x + \left\langle K_{\theta}(V),K_{\theta}(V)\right\rangle_x.\]

Note that a curve $\sigma (t)=(\alpha(t),v(t))$ in  $TM$ is \textit{horizontal} if its tangent vector is horizontal, which is the same as saying that the vector field $v(t)$ is the parallel transport of its initial vector along $\alpha(t)$.

The collection of unit tangent vectors $T^1(M)$ is a Riemannian subspace of $TM$ with the induced Riemannian metric. 

\subsection{Geodesics in $T^1(\mathbb{H}^2)$}

We will now describe the geodesics of the unit tangent bundle of the hyperbolic plane, endowed with the induced Sasaki metric. The account in this paragraph follows Sasaki's work from \cite{S2}.

A curve $\Gamma$ on $T^1(\mathbb{H}^2)$ is a unit vector field $y(\sigma)$ along a curve $x(\sigma)=\pi(\Gamma)$ in $\mathbb{H}^2$, where $\sigma$ is the arc length of $\Gamma$. Let $x'$ denote $\frac{dx}{d\sigma}$ and $\nabla$ denote the Riemannian connection of $\mathbb{H}^2$. (We work throughout with the upper half plane model of the hyperbolic plane). Then $\left\langle \left\langle \Gamma',\Gamma'\right\rangle\right\rangle =1$, which is equivalent to 
\[\left\langle x',x'\right\rangle+\left\langle \nabla_{x'}y, \nabla_{x'}y\right\rangle = 1.\]

Putting $c^2=\left\langle \nabla_{x'}y, \nabla_{x'}y\right\rangle$ we have $\left\langle x',x'\right\rangle=1-c^2$ and $0\leq c \leq1$. The conditions for $\Gamma$ to be a geodesic in $T^1(\mathbb{H}^2)$ are that $c$ is a constant and $x(\sigma)$ and $y(\sigma)$ satisfy the differential equations 
\[x''=by-a \nabla_{x'}y, \ \nabla_{x'}\nabla_{x'}y=-c^2y.\]

where $a=\left\langle x',y\right\rangle$ and $b=\left\langle x',\nabla_{x'}y\right\rangle$.

Using $c$, one may characterize the geodesics in $T^1(\mathbb{H}^2)$ into the following types: 

\begin{enumerate}

\item Horizontal type or $c=0$: In this case, $ \nabla_{x'}y=0$ and so $\Gamma$ is a horizontal geodesic in the unit tangent bundle. Its corresponding $\pi$ image in the hyperbolic plane is also a geodesic. 

\item Vertical type or $c=1$: In this case the image of $\Gamma$ under $\pi$ is a point and the geodesic $\Gamma$ is a great circle that lives entirely in the fibre above that point. 

\item Oblique type or $0<c<1$ : Let $T$, $N$ denote the unit tangent vector and the principal normal vector of $x$ in $\mathbb{H}^2$; let $\kappa$ be the curvature of $x$ and $s$, its arclength. Then, $\frac{ds}{d\sigma}= \sqrt{1-c^2}$ and $x'=\sqrt{1-c^2}T$ while $x''=(1-c^2)\kappa N$. One can show that $(1-c^2)^2\kappa^2=c^2$ and so $\kappa$ is always constant. Hence, $x$ is an equidistant curve, a horocycle or a circle in the hyperbolic plane, depending on whether $\kappa^2$ is less than 1, equal to 1 or greater than 1.

\end{enumerate}

\subsection{Asymptotic Cones of $\slr$}

\begin{theorem} \label{slr} There exists a $(1,\pi)$-quasi-isometry from $\slr$, endowed with the Sasaki metric to the CAT(0) space $\mathbb{H}^2\times \mathbb{R}$.\end{theorem}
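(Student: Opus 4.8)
The goal is to build an explicit map $\Phi \colon \slr \to \mathbb{H}^2 \times \mathbb{R}$ which is a $(1,\pi)$-quasi-isometry. The natural candidate is the composition of the covering map $\slr \to T^1(\mathbb{H}^2)$ with the map sending a unit tangent vector $\theta = (x,v)$ to the pair $(x, \tau(v))$, where $\tau$ records a "winding coordinate" — essentially the lift to $\mathbb{R}$ of the angle of $v$ in the fibre $T^1_x\mathbb{H}^2 \cong S^1$. More precisely, one trivializes $T^1(\mathbb{H}^2)$ as $\mathbb{H}^2 \times S^1$ (the hyperbolic plane is contractible, so the unit tangent bundle is trivial), lifts the $S^1$-factor to $\mathbb{R}$ in passing to the universal cover, and takes $\Phi$ to be "the identity" in these coordinates, now comparing the Sasaki metric on the domain with the product (Riemannian, hence CAT(0)) metric on $\mathbb{H}^2 \times \mathbb{R}$. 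It is clear that $\Phi$ is a bijection, so the surjectivity/coarse-density constant is $0$; everything rests on the metric distortion estimate.

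\textbf{Key steps.} First I would fix the trivialization and write both metrics down in the same coordinate chart: on $\slr$ the Sasaki metric splits $T_\theta$ into horizontal and vertical parts, and under the trivialization the horizontal space maps isomorphically (via $d\pi$) to $T_x\mathbb{H}^2$ while the vertical space is one-dimensional and $K_\theta$ identifies it with a line in $T_x\mathbb{H}^2$; the subtlety is that the horizontal distribution is \emph{not} the coordinate distribution $\partial/\partial(\text{base})$ of the product trivialization — it differs by a connection one-form (the Levi-Civita connection of $\mathbb{H}^2$ has nonzero curvature, so the bundle is non-flat as a connection even though it is topologically trivial). Second, I would use the classification of geodesics in $T^1(\mathbb{H}^2)$ from Sasaki's work, recalled in the previous subsection (horizontal, vertical, and oblique types with parameter $c$), to understand the geometry of geodesic segments: a shortest path between two points of $\slr$ whose base points are far apart is nearly horizontal ($c$ close to $0$), and the "cost" of the vertical displacement is concentrated and bounded. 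Third — the heart of the matter — I would show that for any two points $\theta_1,\theta_2$ the Sasaki distance and the product distance differ by at most $\pi$. The upper bound $d_{\mathrm{Sas}}(\theta_1,\theta_2) \le d_{\mathrm{prod}}(\theta_1,\theta_2) + \pi$ should come from concatenating: transport along a product geodesic in the base (which is realizable horizontally up to a fibre error, because parallel transport around the relevant region introduces an angle defect controlled by the Gauss--Bonnet area, but along a single geodesic segment parallel transport is free) followed by a rotation in a single fibre, which is a great circle of length $2\pi$, so costs at most $\pi$ to reach any fibre point. The reverse bound $d_{\mathrm{prod}} \le d_{\mathrm{Sas}}$ (or $d_{\mathrm{Sas}} - \pi \le d_{\mathrm{prod}}$) should follow because the projection $\slr \to \mathbb{H}^2$ is $1$-Lipschitz for both metrics and the winding coordinate is $1$-Lipschitz from the Sasaki metric by construction (the vertical part of the Sasaki norm is exactly $|d\tau|$ up to the holonomy correction), after checking that the holonomy correction contributes a bounded, not linear, error.

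\textbf{Main obstacle.} The delicate point is precisely the interplay between the topological triviality of $T^1(\mathbb{H}^2)$ and the non-triviality of the Levi-Civita connection: the product trivialization's flat connection and the Sasaki horizontal distribution disagree, and one must show the discrepancy accumulates to at most a constant ($\pi$) rather than growing with distance. I expect this to be handled by the oblique-geodesic analysis: an oblique geodesic with small parameter $c$ projects to an equidistant curve of small geodesic curvature $\kappa$ with $(1-c^2)^2\kappa^2 = c^2$, and one extracts from the relation $\frac{ds}{d\sigma} = \sqrt{1-c^2}$ that the extra Sasaki length over the base length, integrated along such a geodesic, is uniformly bounded — the vertical winding a geodesic can "freely" accumulate is capped because beyond a bounded amount it becomes cheaper to do the winding in a single fibre. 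Pinning down this uniform bound and checking the constant is exactly $\pi$ (the half-circumference of the unit-speed great circle in a fibre) is where the real work lies; the rest is bookkeeping in the Sasaki formalism set up in Subsection 6.1.

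Once Theorem~\ref{slr} is established, Theorem E follows immediately: by Corollary~\ref{isometry} a $(1,\pi)$-quasi-isometry induces an isometry on asymptotic cones, so every asymptotic cone of $\slr$ is isometric to an asymptotic cone of $\mathbb{H}^2 \times \mathbb{R}$; since taking asymptotic cones commutes with direct products and every asymptotic cone of $\mathbb{H}^2$ is the infinitely-branching homogeneous $\mathbb{R}$-tree (the asymptotic cone of any hyperbolic space), the cone is the product (with the square metric) of that $\mathbb{R}$-tree with $\mathbb{R}$, which is CAT(0); and co-compact lattices in $\slr$, being quasi-isometric to $\slr$ by the \v{S}varc--Milnor lemma, inherit CAT(0) asymptotic cones, hence are asymptotically CAT(0).
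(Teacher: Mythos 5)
Your overall strategy is the same as the paper's (trivialise the bundle by a section, read off a fibre coordinate, and compare the Sasaki metric with the product metric on $\hr$ up to an additive $\pi$), and you correctly locate the difficulty in the mismatch between the flat trivialisation and the Levi--Civita horizontal distribution. However, the sketch contains one outright error and one deferred step that is the actual content of the theorem. The error is the claim that the fibre correction is ``a rotation in a single fibre, which is a great circle of length $2\pi$, so costs at most $\pi$ to reach any fibre point.'' That is true in $T^1(\mathbb{H}^2)$, where the fibre is a circle, but not in $\slr$: upstairs the fibre is a line (the lift of the vertical great-circle geodesic), and reaching a point whose lifted fibre coordinate differs by $r$ costs on the order of $r$. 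If your claim held, then concatenating a horizontal lift of the base geodesic with a fibre move of cost at most $\pi$ would make the projection $\slr\rightarrow\mathbb{H}^2$ itself a $(1,\pi)$-quasi-isometry, forcing every asymptotic cone of $\slr$ to be an $\mathbb{R}$-tree and $\slr$ to be hyperbolic --- contradicting the presence of $\mathbb{Z}^2$ subgroups in its cocompact lattices (and the very statement of Theorem E). Moreover, even with the corrected fibre cost $|r+\delta|$ ($\delta$ the holonomy discrepancy), your concatenation ``base geodesic, then fibre rotation'' only yields $d_{\mathrm{Sas}}\leq d+r+\pi$, and $d+r$ can be as large as $\sqrt{2}\sqrt{d^2+r^2}$, so the multiplicative constant $1$ is lost. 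One must instead use the oblique curve that rotates at constant rate while traversing the base geodesic, whose Sasaki length is $\sqrt{d^2+(r+\delta)^2}\leq\sqrt{d^2+r^2}+|\delta|$.

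The second issue is that the uniform bound $|\delta|\leq\pi$ --- the statement that the discrepancy between the fibre coordinate given by your trivialisation and the rotation measured against parallel transport along the projection of a minimising geodesic never exceeds $\pi$, no matter how far apart the points are --- is exactly what you set aside as ``where the real work lies.'' That bound is the whole theorem; the Gauss--Bonnet angle-defect remark gestures in the right direction (for the paper's section, defined by parallel transport along geodesic rays from a fixed basepoint, the discrepancy along a geodesic side is controlled by the area of a hyperbolic triangle, hence by $\pi$), but nothing in the proposal establishes it, nor handles the non-geodesic projections (circles, horocycles, equidistant curves) that oblique Sasaki geodesics actually have. For comparison, the paper proceeds by writing a minimising geodesic $\eta$ as a vector field rotating at constant rate along its projection $\bar{\eta}$, so that $L=l(\eta)$ satisfies a formula of the shape $\sqrt{l(\bar{\eta})^2+(r-P_{\bar{\eta}}(w))^2}$ with the parallel-transport term bounded in absolute value by $\pi$; the inequality $L\leq D$ comes from the oblique comparison curve over the base geodesic, and $D\leq L+\pi$ follows from the two cases $r\leq\pi$ and $r\geq\pi$. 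Until you prove the uniform $\pi$ bound and replace the concatenation by the oblique comparison curve, the proposal does not establish the $(1,\pi)$ estimate.
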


\begin{proof} The proof exploits the structure of $\slr$ as a Riemannian manifold. We saw that the action of $PSL(2,\mathbb{R})$ on the hyperbolic plane gives an identification of $PSL(2,\mathbb{R})$ with the unit tangent bundle $T^1(\mathbb{H}^2)$ of $\mathbb{H}^2$. Thus, $\slr$ is the universal cover of $T^1(\mathbb{H}^2)$. One uses the Sasaki metric to make the tangent bundle of $\mathbb{H}^2$ into a Riemannian manifold. The Riemannian metric on $T^1(\mathbb{H}^2)$ is the induced Sasaki metric. Let $\pi: \slr \rightarrow \mathbb{H}^2$ denote the canonical projection of $\slr$ onto the hyperbolic plane. 

We now describe an identification of $\slr$ with $\mathbb{H}^2 \times \mathbb{R}$. Fix a base point $*$ on $\mathbb{H}^2$ and a reference unit vector $v \in T_*(\mathbb{H}^2)$. For any curve $\alpha \in \mathbb{H}^2$, denote by $P_{\alpha}(w)$, the parallel transport of a vector $w$ along $\alpha$. Using $(*,v)$, form a section $s$ of $T^1(\mathbb{H}^2) \rightarrow \mathbb{H}^2$ as follows: given $x\in \mathbb{H}^2$, let $\gamma$ be the unique geodesic joining $*$ to $x$ in the hyperbolic plane. Define $s(x)=(x, P_{\gamma}(v))$. Since $\mathbb{H}^2$ is simply connected, the section $s$ lifts to a section $\tilde{s}$ of $\mathbb{H}^2$ to $\slr$. With this global section, one can describe a point $P$ in $\slr$ with an ordered pair $(\pi(P), \theta(P))\in \hr$, where $\pi(P)$ is the projection of $P$ to $\mathbb{H}^2$ and $\theta(P)$ is the distance of $P$ from $\tilde{s}(\pi(P))$ in $\slr$. 

We know that a geodesic in $\slr$ projects, via the map $\pi$, to one of the following: a point, an $\mathbb{H}^2$-geodesic or an arc of a proper circle, a horocycle or an equidistant curve. Since a $\slr$-geodesic $\eta$ is simply a vector field along the projection $\bar{\eta}:=\pi(\eta)$, the general form of a geodesic joining two points, $P$ and $Q$ (which are identified with $(\pi(P),\theta(P))$ and $(\pi(Q),\theta(Q))$ respectively) is given by $\eta(t)=(\bar{\eta}(t),P_{\bar{\eta}}(w)+tc_{\eta} )$, where $w$ is the initial vector of $\eta$ and $|c_{\eta}|\leq 1$ is the rate of rotation of $w$ along $\bar{\eta}$. 

Let $\eta$ be a distance-minimising geodesic in $\slr$ from $P$ to $Q$. We assume for simplicity that $\theta(Q) \geq \theta(P)$. The length $l(\eta)$ in the Sasaki metric is given by $l(\eta)=\sqrt{l(\bar{\eta})^2+(\theta(Q)-\theta(P)-P_{\bar{\eta}}(w))^2}$. 

Observe that if the points $P$ and $Q$ are joined by the curve $\alpha$ in $\slr$ whose projection in $\mathbb{H}^2$ is the geodesic joining $\pi(P)$ and $\pi(Q)$, then $l(\eta) \leq l(\alpha)$, because $l(\alpha)=\sqrt{d(\pi(P),\pi(Q))^2+(\theta(Q)-\theta(P))^2}$. On the other hand, $|P_{\bar{\eta}}(w)|\leq \pi$, which implies that if $\theta(Q)-\theta(P) \geq \pi$ then the length of $\eta$ is at least $\sqrt{l(\bar{\eta})^2+(\theta(Q)-\theta(P)-\pi)^2}$. 

Let $d=d(\pi(P),\pi(Q))$, $r=\theta(Q)-\theta(P)$, $L=l(\eta)$ and $D=\sqrt{d^2+r^2}$. Note that $D$ is the distance between the images of $P$ and $Q$ in $\hr$, while $L$ is the distance between them in $\slr$. As the length $L=l(\eta)$ of $\eta$ is no larger than $\sqrt{d(\pi(P),\pi(Q))^2+(\theta(Q)-\theta(P))^2}$, we deduce that $L \leq D$. 

If $\theta(Q)-\theta(P) \leq \pi$ then by the triangle inequality, $D\leq d + r$ $\leq L+\pi$. If however, $\theta(Q)-\theta(P) \geq \pi$, then $l(\eta) \geq \sqrt{d(\pi(P),\pi(Q))^2+(\theta(Q)-\theta(P)-\pi)^2}$. So, $d^2 + (r-\pi)^2 \leq L^2$. Since $r\leq L+\pi$, $D^2 \leq (L+\pi)^2$.

In all cases, we have $L \leq D \leq L + \pi$. \end{proof}

\begin{corollary}The Lie group $\slr$ is asymptotically CAT(0).\end{corollary}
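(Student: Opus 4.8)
The plan is to deduce the corollary directly from Theorem~\ref{slr} together with Corollary~\ref{isometry} and the fact that the $\check{S}$varc--Milnor lemma identifies a co-compact lattice with the space it acts on up to quasi-isometry. First I would invoke Theorem~\ref{slr}: it furnishes a $(1,\pi)$-quasi-isometry $\Phi: \slr \to \hr$, where $\slr$ carries the Sasaki metric. By Corollary~\ref{isometry}, a $(1,\epsilon)$-quasi-isometry induces an isometry at the level of asymptotic cones, so every asymptotic cone of $\slr$ (with respect to any non-principal ultrafilter $\omega$, scaling sequence $(a_n)$, and basepoints) is isometric to the corresponding asymptotic cone of $\hr$.

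Next I would observe that $\hr = \mathbb{H}^2 \times \mathbb{R}$ is a CAT(0) space: $\mathbb{H}^2$ is CAT($-1$) hence CAT(0), $\mathbb{R}$ is CAT(0), and a product of CAT(0) spaces with the $\ell^2$ (square) metric is CAT(0) by Theorem~II.1.12 of \cite{BH} (or the same reasoning cited elsewhere in the paper). As noted in the introduction, every asymptotic cone of a CAT(0) space is again CAT(0) (see \cite{BH}). Therefore every asymptotic cone of $\hr$ is CAT(0), and by the isometry from the previous step, so is every asymptotic cone of $\slr$ with the Sasaki metric. In other words, $\slr$ endowed with the Sasaki metric is an asymptotically CAT(0) geodesic space.

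Finally I would bring in the group. A co-compact lattice $\Gamma$ in $\slr$ acts properly and co-compactly by isometries on $\slr$ with its left-invariant Sasaki metric (left-invariance is exactly what makes the lattice act by isometries, and co-compactness of the lattice gives a co-compact action). Since $\slr$ with the Sasaki metric is a geodesic space whose asymptotic cones are all CAT(0), this is precisely a geometric action on an asymptotically CAT(0) geodesic space, so $\Gamma$ is an asymptotically CAT(0) group by Definition. Hence $\slr$ is asymptotically CAT(0) in the sense that its co-compact lattices are asymptotically CAT(0) groups; the more precise description of the cones as a square-metric product of the infinitely branching homogeneous $\mathbb{R}$-tree with $\mathbb{R}$ follows because the asymptotic cone of $\mathbb{H}^2$ (being a non-elementary hyperbolic space) is the universal homogeneous $\mathbb{R}$-tree of valence continuum, the asymptotic cone of $\mathbb{R}$ is $\mathbb{R}$, and taking asymptotic cones commutes with (finite) direct products.

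I do not anticipate a serious obstacle here: the two genuinely substantive inputs — the $(1,\pi)$-quasi-isometry and the metric characterization/cone-stability machinery — have already been established, so the corollary is a matter of assembling them. The only point requiring a word of care is that $\slr$ is \emph{not} itself CAT(0) (co-compact lattices contain $\mathbb{Z}^2$ and cannot act properly by semisimple isometries on a CAT(0) space, as recalled at the start of the section), so the conclusion genuinely uses the asymptotically CAT(0) framework rather than any direct CAT(0) structure; one should state this explicitly to avoid the impression that the corollary is vacuous or that $\slr$ is being claimed to be CAT(0).
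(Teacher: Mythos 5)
Your proposal is correct and follows essentially the same route as the paper: invoke Theorem~\ref{slr} to get the $(1,\pi)$-quasi-isometry with $\hr$, apply Corollary~\ref{isometry} to conclude that every asymptotic cone of $\slr$ is isometric to the corresponding cone of $\hr$ (a CAT(0) space, with cones isometric to the product of the infinitely branching homogeneous $\mathbb{R}$-tree with $\mathbb{R}$). The extra remarks about lattices acting geometrically via the left-invariant Sasaki metric are consistent with the paper's Theorem~E but are not needed for this corollary itself.
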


\begin{proof}By Corollary \ref{isometry}, a $(1,\pi)$-quasi-isometry induces an isometry at the level of asymptotic cones and so every asymptotic cone of $\slr$ is a direct product of the real line with the infinitely branching homogeneous $\mathbb{R}$-tree.\end{proof}

\section{Interesting Questions}

There are many interesting questions that one may formulate about asymptotically CAT(0) groups. Here is a selection. 

\noindent \textbf{Asymptotically CAT(0) graphs. } The conjecture below is commonly attributed to Erd\'os and a mention of it maybe found in \cite{pach}.
\begin{conjecture} The integer points in the Euclidean Plane may be connected to form a graph which is $(1,k)$-quasi-isometric to the Euclidean plane. \end{conjecture}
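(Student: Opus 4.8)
The plan is to construct a locally finite graph $\Gamma$ with vertex set $\mathbb{Z}^2$, whose edges are straight segments weighted by their Euclidean length, so that the induced path metric $d_\Gamma$ satisfies $d_\Gamma(p,q)\le |p-q|+k$ for all $p,q\in\mathbb{Z}^2$ and some absolute constant $k$ (bounded vertex degree would be desirable). The reverse inequality $d_\Gamma(p,q)\ge |p-q|$ is automatic, since every $\Gamma$-path from $p$ to $q$ has Euclidean length at least $|p-q|$, and $\mathbb{Z}^2$ is coarsely dense in $\mathbb{E}^2$; hence such a $\Gamma$ would be a $(1,k)$-quasi-isometry onto $\mathbb{E}^2$, and by Corollary \ref{isometry} all of its asymptotic cones would be isometric to $\mathbb{E}^2$. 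This would exhibit a purely combinatorial family of asymptotically $CAT(0)$ graphs to stand alongside the wrinkled plane of Proposition \ref{wrinkledplane} (and, via Theorem \ref{metric}, a graph whose error function $f$ is at most $O(\sqrt r)$ but need not be bounded).

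For the construction I would use a multiscale ``highway'' scheme. Retain all unit edges of the standard grid, which guarantees connectivity and short-range movement. For each dyadic scale $2^j$, $j\ge 0$, fix a finite set $\Theta_j$ of about $2^{j/2}$ directions that is $\epsilon_j$-dense in the circle with $\epsilon_j\asymp 2^{-j/2}$; for every $\theta\in\Theta_j$ install a family of \emph{level-$j$ highways}: bi-infinite lattice polygonal paths that track the lines of direction $\theta$, built from hops of size $\asymp 2^{j/2}$, with parallel highways of a given direction spaced some distance $\rho_j$ apart. The choice $\epsilon_j\asymp 2^{-j/2}$ is dictated by the basic geometric estimate that a polygonal path staying within angle $\epsilon_j$ of a fixed direction over a distance $\asymp 2^j$ overshoots its chord by only $\asymp 2^j\epsilon_j^2\asymp 1$; hence travelling $\asymp 2^j$ along level-$j$ highways in roughly the right direction (switching direction finitely often, since $\Theta_j$ is a net) costs only an additive constant over the Euclidean distance.

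The routing from $p$ to $q$, with $D:=|p-q|$ and $2^j\le D<2^{j+1}$, is then: climb onto the level-$j$ network near $p$, run along it in the direction of $\Theta_j$ closest to that of $q-p$, and climb off near $q$. \textbf{The main obstacle is the cost of these on- and off-ramps.} Bounded vertex degree forces the highway spacing $\rho_j$ to grow at least like $2^j$ (there are already $\asymp 2^{j/2}$ directions, each highway influencing a $\asymp 2^{j/2}$-neighbourhood), so the nearest usable level-$j$ highway lies at distance $\asymp 2^j\asymp D$ --- far more than the permitted $k$. A two-scale scheme therefore cannot work; one must instead route through a descending cascade of scales $j,j-1,\dots,0$ near each endpoint, and arrange that the level-$i$ ramp detour decays geometrically in $j-i$ so that the total ramp cost is a convergent series bounded independently of $p$ and $q$ --- all while keeping every vertex of finite degree. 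These two demands pull against each other (cheap ramps want many highways, finite degree wants few), and reconciling them is exactly where, as far as I know, the problem remains open: a correct tuning of the hierarchy would prove the conjecture, whereas a counting or entropy obstruction --- in the spirit of the pathological cones of \cite{TV} and \cite{OOS} --- would refute it.
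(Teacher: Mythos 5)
This statement is a conjecture in the paper (attributed to Erd\'os, via \cite{pach}); the paper offers no proof of it, and in fact leans the other way: its second conjecture, that a graph is asymptotically CAT(0) if and only if it is hyperbolic, would \emph{refute} this one, precisely by the observation you make --- that such a graph would have all asymptotic cones isometric to $\mathbb{E}^2$ by Corollary \ref{isometry}, hence be asymptotically CAT(0) but not hyperbolic. So that part of your write-up correctly reproduces the paper's own remark linking the two conjectures.

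As a proof, however, your submission has a genuine and self-acknowledged gap: the construction is never completed. The whole burden of the statement is to exhibit the graph, and your multiscale highway scheme stops exactly at the decisive step. You correctly observe that with bounded (or even locally finite) degree the level-$j$ network must have spacing $\rho_j$ comparable to $2^j$, so that the on-ramp from a generic point $p$ to the level-$j$ highways already costs on the order of $D=|p-q|$, destroying the additive bound; you then propose a cascade through scales $j, j-1, \dots, 0$ with geometrically decaying ramp detours, but you neither construct such a hierarchy nor verify that the detour at level $i$ can be made $O(c^{\,j-i})$ while keeping degrees finite --- and you explicitly state that reconciling these two demands ``remains open.'' That is the entire content of the conjecture, so nothing has been proved. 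Two further cautions: first, your choice to weight edges by Euclidean length (which makes $d_\Gamma \geq |p-q|$ automatic) is an interpretive convention, and with long hops of size about $2^{j/2}$ you are tacitly allowing unboundedly long edges, which may or may not be within the intended formulation of ``connecting the integer points''; second, the heuristic estimate that a path within angle $\epsilon_j \asymp 2^{-j/2}$ of its chord over distance $2^j$ overshoots by $O(1)$ is fine pointwise, but you have not controlled the accumulated error over the direction changes and scale changes of the full route. In short, this is a reasonable research programme, with the correct identification of the obstruction and of the consequence for asymptotic cones, but it is not a proof, and the paper contains none either.
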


A graph is $\delta$-CAT(0) if and only if it is hyperbolic (see \cite{thesis}). One wonders if this is also the case with asymptotically CAT(0) graphs. I would like to propose the following conjecture.

\begin{conjecture}A graph is asymptotically CAT(0) if and only if it is $\delta$-hyperbolic. \end{conjecture}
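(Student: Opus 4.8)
I would prove the two implications separately; the \emph{if} direction is immediate and the \emph{only if} direction is the substance of the conjecture. For the \emph{if} direction, suppose $\Gamma$ is a $\delta$-hyperbolic graph. Every asymptotic cone of a $\delta$-hyperbolic geodesic space is a $0$-hyperbolic geodesic space, i.e. an $\mathbb{R}$-tree; this is the fact underlying the assertion cited above (see \cite{D}) that asymptotic cones of hyperbolic groups are $\mathbb{R}$-trees. Since an $\mathbb{R}$-tree is $CAT(\kappa)$ for every $\kappa$, in particular CAT(0), all asymptotic cones of $\Gamma$ are CAT(0), so $\Gamma$ is asymptotically CAT(0). (Equivalently: a $\delta$-hyperbolic geodesic space is $\delta'$-CAT(0) for a universal multiple $\delta'$ of $\delta$ — the comparison inequality of Definition \ref{maindef} is one-sided, so thinness of triangles makes it cheap to satisfy — and $\delta'$-CAT(0) spaces are asymptotically CAT(0).)

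For the converse, let $\Gamma$ be an asymptotically CAT(0) graph. In view of the characterization that a graph is $\delta$-CAT(0) if and only if it is hyperbolic (see \cite{thesis}), the task is to upgrade the sublinear comparison defect furnished by Theorem \ref{metric} to a uniform one: to show that if balls of radius $r$ in a graph are $f(r)$-CAT(0) with $\lim_{r\to\infty} f(r)/r = 0$, then $\Gamma$ is $\delta$-CAT(0) for a single constant $\delta$, hence hyperbolic. The first step is to extract a weak, but sublinear, form of thinness. Given a geodesic bigon in $\Gamma$ with sides $\sigma,\sigma'$ and common endpoints $x,y$, apply the $f(r)$-CAT(0) inequality to the degenerate geodesic triangle with vertices $x,y,y$ (third side the constant path at $y$), where $r\leq d(x,y)$: the comparison triangle in $\mathbb{E}^2$ is a segment on which the comparison points of equal-parameter points of $\sigma$ and $\sigma'$ coincide, so the inequality forces $\sigma$ and $\sigma'$ to $f(r)$-fellow-travel. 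Hence geodesic bigons in $\Gamma$ are \emph{sublinearly thin}: there is a function $h$ with $h(n)/n\to 0$ such that any two geodesics with common endpoints at distance at most $n$ are within Hausdorff distance $h(n)$ of each other. (The same conclusion follows directly from the asymptotic cones: each cone of $\Gamma$ is CAT(0), hence uniquely geodesic, and a family of bigons violating sublinear thinness produces, after rescaling by the defect, two distinct geodesics with the same endpoints in some cone.)

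It remains to prove the \emph{gap theorem}: a graph whose geodesic bigons are sublinearly thin is hyperbolic. This is the step I expect to be the main obstacle. Its uniform counterpart — uniformly thin bigons imply hyperbolicity — is Papasoglu's bigon criterion, which genuinely uses that the space is a graph (or quasi-homogeneous): note that $\mathbb{E}^2$ is uniquely geodesic yet not hyperbolic, so no conclusion of this type can hold for arbitrary geodesic spaces. What is needed, then, is that the bigon-thinness function of a graph admits no strictly sublinear, unbounded growth rate: it is either bounded or grows at least linearly. I would attempt this by a surgery/doubling argument in the spirit of the ``no-gap'' theorems for Dehn functions (a subquadratic Dehn function is linear; Bowditch, Ol'shanskii): from a bigon of Hausdorff width $\asymp h(n)$ between points at distance $\asymp n$, one splices together two controlled pieces of $\Gamma$ to build a bigon of width $\gtrsim 2h(n)$ between points at distance $O(n)$, so that $h$ satisfies a self-improving inequality forcing linear growth unless it is bounded. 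The delicate point, as always with general (non-homogeneous) graphs, is that one cannot move a ``wide'' sub-bigon around by a group element; here I would exploit the uniform control on the local geometry near the wide region already encoded in the $f(r)$-CAT(0) hypothesis of Theorem \ref{metric}, or pass to an $\omega$-limit graph to recover enough homogeneity to run Papasoglu's argument. Once the gap theorem is established, $\Gamma$ is hyperbolic, and the conjecture follows.
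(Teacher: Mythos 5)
You are attempting to prove a statement that the paper itself poses as an open conjecture and does not prove, so there is no proof in the paper to compare against; note also that, as the author points out, an affirmative answer would refute the Erd\'os-type conjecture quoted just before it, so one should not expect a routine argument. The parts of your proposal that are correct are the easy ones: the \emph{if} direction (cones of a $\delta$-hyperbolic graph are $\mathbb{R}$-trees, hence CAT(0), cf.\ \cite{D}), the reduction of the \emph{only if} direction to upgrading the sublinear defect of Theorem \ref{metric} to a uniform one via the thesis result that a graph is $\delta$-CAT(0) iff hyperbolic, and the derivation of sublinearly thin bigons, either by the degenerate-triangle trick with Definition \ref{maindef} or by unique geodesicity of the CAT(0) cones.

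The genuine gap is exactly where you flag it: the ``gap theorem'' that a graph with sublinearly thin bigons (or with sublinear comparison defect $f(r)$) is hyperbolic is not proved; it is only proposed by analogy with the Bowditch--Ol'shanskii--Papasoglu gap phenomena for Dehn functions, and your sketch itself concedes the key obstruction, namely the lack of homogeneity needed to transport a wide sub-bigon and double it. The two escape routes you suggest do not obviously work: the $f(r)$-CAT(0) hypothesis is precisely the sublinear information you are trying to improve, so invoking it is circular as stated; and an $\omega$-limit rescaled at the scale of an unbounded but sublinear defect $h(n)$ pushes the bigon's endpoints off to infinity (since $n/h(n)\to\infty$), so one does not recover a bigon in an asymptotic cone, and the cones themselves are uniquely geodesic, so no wide bigon survives in them to contradict anything. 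No sublinear-to-uniform gap statement of this kind is established in the paper or its references; proving one would essentially resolve the conjecture, which is left open. In short, your proposal correctly settles the \emph{if} direction and correctly identifies, but does not close, the \emph{only if} direction, so it is not a proof of the statement.
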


Observe that an affirmative answer to the second conjecture implies a negative answer to the first. Indeed, if there exists a graph $X$ with vertex set $\mathbb{Z}^2$ such that $X$ is $(1,k)$-quasi-isometric to the Euclidean plane, then all asymptotic cones of $X$ are isometric to the Euclidean plane and thus CAT(0). But then, $X$ is hyperbolic, which contradicts the assumption that $X$ is quasi-isometric to the Euclidean plane. 

\noindent \textbf{Novikov's Conjecture for Asymptotically CAT(0) Groups. } There are many different approaches by which the Novikov Conjecture may be proved for asymptotically CAT(0) groups. One would be to develop an asymptotic notion of $\delta$-bolicity so that existing techniques from \cite{bolic} may be extended to establish the conjecture.

\end{document}